\newtheorem{theorem}{Theorem}[section] 
\newtheorem{lemma}[theorem]{Lemma}     
\newtheorem{corollary}[theorem]{Corollary}
\newtheorem{example}[theorem]{Example}
\newtheorem{definition}[theorem]{Definition}
\newtheorem{remarks}[theorem]{Remarks}
\newtheorem*{theorem*}{Theorem}
\begin{document}

\title[Universal Taylor series with respect to a prescribed subsequence]{Universal Taylor series with respect to a prescribed subsequence}
\author{A. Mouze}
\thanks{The author was partly supported by the grant ANR-17-CE40-0021 of the French National Research Agency ANR (project Front)}
\address{Augustin Mouze, \'Ecole Centrale de Lille, CNRS, UMR 8524 - Laboratoire Paul Painlev\'e  59000 Lille, France}
\email{augustin.mouze@univ-lille.fr}

\keywords{Universal Taylor series}
\subjclass[2010]{30K05}

\begin{abstract} For a holomorphic function $f$ in the open unit disc $\mathbb{D}$ 
and $\zeta\in\mathbb{D}$, $S_n(f,\zeta)$ denotes 
the $n$-th partial sum of the Taylor development of $f$ at $\zeta$. Given an increasing 
sequence of positive integers $\mu=(\mu_n)$, we consider the classes $\mathcal{U}(\mathbb{D},\zeta)$ (resp. 
$\mathcal{U}^{(\mu)}(\mathbb{D},\zeta)$) of such functions $f$ 
such that the partial sums $\{S_n(f,\zeta):n=1,2,\dots\}$ (resp. $\{S_{\mu_n}(f,\zeta):n=1,2,\dots\}$) 
approximate all polynomials uniformly on the compact sets $K\subset\{z\in\mathbb{C}:\vert z\vert\geq 1\}$ with 
connected complement. 
We show that these two classes of universal Taylor series coincide if and only if $\limsup_n\left(\frac{\mu_{n+1}}{\mu_n}\right)<+\infty$. In the same spirit, we prove that, for $\zeta\ne 0,$ we have the equality $\mathcal{U}^{(\mu)}(\mathbb{D},\zeta)=
\mathcal{U}^{(\mu)}(\mathbb{D},0)$ if and only if $\limsup_n\left(\frac{\mu_{n+1}}{\mu_n}\right)<+\infty$. 
Finally we deal with the case of real universal Taylor series.
\end{abstract}

\maketitle

\section{Introduction} As usual $\mathbb{N}$, $\mathbb{Q}$ denote the sets of positive integers and rational numbers respectively. 
Let $\mathbb{D}:=\{z\in\mathbb{C}:\vert z\vert<1\}$ be the open unit disc of the complex plane. Throughout the paper, 
$H(\mathbb{D})$ denotes the vector space of all holomorphic functions on $\mathbb{D}$ endowed with the topology of uniform convergence 
on all compact subsets of $\mathbb{D}$. Also for a compact set $K$ of $\mathbb{C}$ we denote by $A(K)$ the set of all 
functions which are holomorphic in the interior $K^{o}$ of $K$ and continuous on $K$. As usual, for a holomorphic function 
$f$ in the unit disc and $\zeta\in\mathbb{D}$, $S_n(f)$ or $S_n(f,\zeta)$ stands for the $n$-th partial sum of the Taylor development 
of $f$ with center at $0$ or at $\zeta$ respectively. In 1996, Nestoridis proved the following result \cite{NestorF}.

\begin{theorem}\label{thm_NF}\cite{NestorF} There exist Taylor series $f=\sum_{n\geq 0}a_nz^n$ 
such that, for every compact set $K\subset\{z\in\mathbb{C};\vert z\vert\geq 1\}$ 
with connected complement and for every function $h\in A(K)$ there exists 
a subsequence $(\lambda_n)\subset\mathbb{N}$ such that 
$S_{\lambda_{n}}\left(f\right)$ converges to $h$, as $n\rightarrow +\infty$, uniformly on $K$.will

\end{theorem}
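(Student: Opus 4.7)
The plan is to apply the Baire category theorem in the Fréchet space $H(\mathbb{D})$ and realize the set of universal Taylor series as a dense $G_\delta$ intersection. The construction rests on two standard ingredients: an exhausting countable family of compact sets and Mergelyan's theorem. First I would fix a countable family $(K_m)_{m\geq 1}$ of compact subsets of $\{z\in\mathbb{C}:\vert z\vert\geq 1\}$ with connected complement such that every compact $K\subset\{\vert z\vert\geq 1\}$ with connected complement is contained in some $K_m$ (a concrete choice is to take finite unions of closed discs with centres in $\mathbb{Q}+i\mathbb{Q}$ and rational radii, further restricted to preserve the connectedness of the complement); the existence of such a cofinal family is a standard topological fact. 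Next, by Mergelyan's theorem, polynomials are dense in each $A(K_m)$, so I enumerate a countable family $(p_j)_{j\geq 1}$ of polynomials with coefficients in $\mathbb{Q}+i\mathbb{Q}$ which is dense in $A(K_m)$ for every $m$.

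For each triple $(m,j,s)\in\mathbb{N}^3$, I would introduce the set
\[
U(m,j,s)=\bigcup_{n\in\mathbb{N}}\Bigl\{f\in H(\mathbb{D}):\sup_{z\in K_m}\vert S_n(f)(z)-p_j(z)\vert<\tfrac{1}{s}\Bigr\}.
\]
Each $U(m,j,s)$ is open in $H(\mathbb{D})$ because, for every fixed $n$, the map $f\mapsto S_n(f)$ is continuous from $H(\mathbb{D})$ into the space of polynomials of degree at most $n$. The heart of the argument is to prove density. Given $f\in H(\mathbb{D})$, a compact set $L\subset\mathbb{D}$ and $\varepsilon>0$, I would enclose $L$ in a closed disc $\overline{D}(0,r)$ with $r<1$. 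A short topological verification, using that $\mathbb{C}\setminus K_m$ is connected and contains $\mathbb{D}$, shows that the compact set $\overline{D}(0,r)\cup K_m$ has connected complement. Applying Mergelyan's theorem to this compact set and to the function which equals $f$ on $\overline{D}(0,r)$ and $p_j$ on $K_m$, one obtains a polynomial $P$ with $\sup_{\overline{D}(0,r)}\vert P-f\vert<\varepsilon$ and $\sup_{K_m}\vert P-p_j\vert<1/s$. Taking $g=P$, one has $g\in H(\mathbb{D})$, $\sup_L\vert g-f\vert<\varepsilon$, and $S_n(g)=P$ for every $n\geq\deg P$, so $g\in U(m,j,s)$.

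By the Baire category theorem, the intersection $\mathcal{U}:=\bigcap_{m,j,s}U(m,j,s)$ is a dense $G_\delta$ subset of $H(\mathbb{D})$, in particular nonempty. To check that every $f\in\mathcal{U}$ is a universal Taylor series in the sense of the statement, one picks an arbitrary compact $K\subset\{\vert z\vert\geq 1\}$ with connected complement and $h\in A(K)$; choosing $m$ with $K\subset K_m$, approximating $h$ uniformly on $K_m$ by some $p_j$ via Mergelyan, and then selecting an index $n$ with $\sup_{K_m}\vert S_n(f)-p_j\vert<1/s$, a three-epsilon argument produces the required subsequence $(\lambda_n)$. The main obstacle I expect is the verification that $\overline{D}(0,r)\cup K_m$ has connected complement so that Mergelyan may be invoked, together with the set-up of the cofinal countable family $(K_m)$; both are essentially topological and combinatorial, but they are precisely the ingredients that turn a one-shot approximation into a Baire-category construction.
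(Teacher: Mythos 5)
The paper does not prove Theorem~\ref{thm_NF}; it is quoted as background from the cited reference \cite{NestorF}, so there is no in-paper proof to compare against. Your Baire-category argument, built on a countable cofinal family $(K_m)$ of compacts in $\{\vert z\vert\geq 1\}$ with connected complement, a countable dense family of polynomials, the open sets $U(m,j,s)$, and Mergelyan's theorem applied to $\overline{D}(0,r)\cup K_m$, is exactly the standard proof and is essentially the one in Nestoridis's original paper (and is the reason the paper can assert that $\mathcal{U}(\mathbb{D},0)$ is a dense $G_\delta$ subset of $H(\mathbb{D})$). The topological verification that $\mathbb{C}_\infty\setminus(\overline{D}(0,r)\cup K_m)$ is connected is fine: every point of that complement can be joined inside it to the annulus $\{r<\vert z\vert<1\}$ by truncating, at its first crossing of an intermediate circle, a path in the connected open set $\mathbb{C}_\infty\setminus K_m$.

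One small gap worth closing: as written, $U(m,j,s)$ only asserts the existence of \emph{some} index $n$ with $\sup_{K_m}\vert S_n(f)-p_j\vert<1/s$, so the ``three-epsilon argument'' does not automatically produce a strictly increasing sequence $(\lambda_n)$ --- a priori the same $n$ could be used for every $s$. The standard fix is to replace $U(m,j,s)$ by $E(m,j,s)=\bigcup_{n>s}\{f:\sup_{K_m}\vert S_n(f)-p_j\vert<1/s\}$, which is still open, and in the density step either note that the Mergelyan polynomial $P$ may be chosen of degree $>s$ or perturb it by a sufficiently small monomial $cz^{s+1}$. With that adjustment your construction is complete and matches the approach of \cite{NestorF}.
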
 

In the sequel, such Taylor series will be called \emph{universal Taylor series} and we will denote by $\mathcal{U}(\mathbb{D},0)$ the set of such 
universal Taylor series. 
In the same spirit, for $\zeta\in\mathbb{D}$, we can replace $S_{\lambda_n}(f)$ by $S_{\lambda_n}(f,\zeta)$ in the previous theorem to 
obtain the class $\mathcal{U}(\mathbb{D},\zeta)$ of universal Taylor series with center $\zeta$. 
The sets $\mathcal{U}(\mathbb{D},0)$ or $\mathcal{U}(\mathbb{D},\zeta)$ enjoy very strong properties. 
For example, these sets are $G_{\delta}$ dense subsets of $H(\mathbb{D})$ and contain, apart 
from $0$, a dense vector subspace of $H(\mathbb{D})$ \cite{bgnp,NestorF}. This last property means that 
the sets $\mathcal{U}(\mathbb{D},0)$ and $\mathcal{U}(\mathbb{D},\zeta)$ are algebraically generic. A very nice theorem asserts that 
for all $\zeta\in\mathbb{D}$, $\mathcal{U}(\mathbb{D},0)=\mathcal{U}(\mathbb{D},\zeta)$. We refer the reader to \cite{GLM} and \cite{MNadv}. A crucial tool for the proof 
is a result initiated by Gehlen, Luh and M\"uller \cite{GLM} which asserts that every universal Taylor series 
actually possesses Ostrowski-gaps, in the sense of the following definition. 
 
\begin{definition}\label{ogapcomplex}{\rm Let $\zeta\in\mathbb{C}$. Let $\sum_{j=0}^{+\infty}a_j(z-\zeta)^j$ be a complex power series with radius of 
convergence $r\in (0,+\infty).$ We say that it has {\it Ostrowski-gaps} $(p_m,q_m)$ if 
$(p_m)$ and $(q_m)$ are sequences of natural numbers with 
\begin{enumerate}
\item $p_1<q_1\leq p_2<q_2\leq\dots$ and $\lim_{m\rightarrow +\infty}\frac{q_m}{p_m}=+\infty,$
\item for $I=\cup_{m=1}^{\infty}\{p_m+1,\dots,q_m\},$ we have $\lim_{j\in I}\vert a_j\vert^{1/j}=0.$
\end{enumerate}
}
\end{definition}
The fact that every universal Taylor series possesses Ostrowski-gaps is at the core of many beautiful results (see for instance 
\cite{Bay1, Char, GLM, Katso, MNadv, MouMun}). Thus the combination of the existence of Ostrowski-gaps with a result of Luh 
(see Theorem 1 of \cite{Luh1986} and Lemma \ref{lemme_npf} below) allows 
to obtain the aforementioned equality $\mathcal{U}(\mathbb{D},0)=\mathcal{U}(\mathbb{D},\zeta)$. Notice that 
we give a new proof of \cite[Theorem 1]{Luh1986} in Section \ref{preliminary_section}. 
Next, in order to prove the algebraic genericity of the class of universal Taylor series, the following subclass of universal series was introduced. 

\begin{definition} {\rm Let $\zeta\in\mathbb{D}$. Let $\mu=(\mu_n)$ be an increasing sequence of positive integers with 
$\mu_n\rightarrow +\infty$ as $n$ tends to infinity. 
A holomorphic function $f\in H(\mathbb{D})$ belongs to 
the class $\mathcal{U}^{(\mu)}(\mathbb{D},\zeta)$ if for every compact set $K\subset\{z\in\mathbb{C};\vert z\vert\geq 1\}$ 
with connected complement and for every function $h\in A(K)$ there exists 
a subsequence $(\lambda_n)\subset\mathbb{N}$ such that 
$S_{\mu_{\lambda_{n}}}(f,\zeta)$ converges to $h$, as $n\rightarrow +\infty$, uniformly on $K$.}
\end{definition} 

Obviously we have $\mathcal{U}^{(\mu)}(\mathbb{D},0)\subset \mathcal{U}(\mathbb{D},0)$ 
(or $\mathcal{U}^{(\mu)}(\mathbb{D},\zeta)\subset \mathcal{U}(\mathbb{D},\zeta)$). 
In \cite{MouMun}, as a consequence of a more general result relating to the weighted densities of subsequences along which 
the partial sums of universal Taylor series realize the universal approximation, 
the authors exhibit non-trivial subsequences $\mu$ of $\mathbb{N}$ such that  
$\mathcal{U}^{(\mu)}(\mathbb{D},0)= \mathcal{U}(\mathbb{D},0)$. For example, the sequences $(\mu_n)=(n^2),$ $(\mu_n)=(2^n)$ or the sequence of prime 
numbers satisfy this property. In \cite[Section 5]{MouMun}, it is asked to characterize the subsequences $\mu$ for which the equality 
$\mathcal{U}^{(\mu)}(\mathbb{D},0)=\mathcal{U}(\mathbb{D},0)$ holds. In this paper, we are going to answer 
this question by establishing the following result. 

\begin{theorem}\label{main_thm} 
Let $\zeta\in\mathbb{D}$. Let $\mu=(\mu_n)$ be a strictly increasing sequence of positive integers. The following assertions 
are equivalent:
\begin{enumerate}[(i)] 
\item\label{1ass1} $\mathcal{U}(\mathbb{D},\zeta)=\mathcal{U}^{(\mu)}(\mathbb{D},\zeta)$
\item\label{2ass1} $\displaystyle\limsup_{n\rightarrow +\infty}\left(\frac{\mu_{n+1}}{\mu_n}\right)<+\infty.$
\end{enumerate}
\end{theorem}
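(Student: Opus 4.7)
The plan is to establish the two implications separately, using the Ostrowski-gap structure that every element of $\mathcal{U}(\mathbb{D},\zeta)$ possesses (by the Gehlen--Luh--M\"uller result recalled in the introduction) combined with the growth hypothesis on $\mu$. For the implication (ii)$\Rightarrow$(i), only the non-trivial inclusion $\mathcal{U}(\mathbb{D},\zeta)\subset\mathcal{U}^{(\mu)}(\mathbb{D},\zeta)$ needs work. Set $M=\limsup_n \mu_{n+1}/\mu_n$, take $f\in\mathcal{U}(\mathbb{D},\zeta)$ with Ostrowski gaps $(p_m,q_m)$, and fix $K$, $h\in A(K)$ and $\varepsilon>0$. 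Let $R=\max_{z\in K}|z-\zeta|$ and choose $m_0$ so large that for all $m\ge m_0$, $q_m/p_m>(M+1)^2$ and $\sum_{j=p_m+1}^{q_m}|a_j|R^j<\varepsilon/2$. Universality of $f$ yields an arbitrarily large index $n$ with $\|S_n(f,\zeta)-h\|_K<\varepsilon/2$; appealing to Lemma~\ref{lemme_npf} (the strengthened form of Luh's theorem re-proved in Section~\ref{preliminary_section}) one may further arrange that $n$ lies in some gap $(p_m,q_m]$ with $m\ge m_0$. Because the ratios $\mu_{k+1}/\mu_k$ are eventually bounded by $M+1$, one can then locate $\mu_k\in(p_m,q_m]$: either the largest $\mu_k\le n$ (which works when $n>(M+1)p_m$, since $\mu_k\ge n/(M+1)>p_m$) or otherwise the smallest $\mu_k>n$, which lies in $(p_m,q_m]$ because $\mu_k\le(M+1)^2 p_m<q_m$. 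The difference $S_n(f,\zeta)-S_{\mu_k}(f,\zeta)$ is a partial sum of gap-coefficients, hence bounded in sup-norm on $K$ by $\varepsilon/2$, and the triangle inequality gives $\|S_{\mu_k}(f,\zeta)-h\|_K<\varepsilon$.

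For (i)$\Rightarrow$(ii), I argue by contraposition: assuming $\limsup\mu_{n+1}/\mu_n=+\infty$, I construct $f\in\mathcal{U}(\mathbb{D},\zeta)\setminus\mathcal{U}^{(\mu)}(\mathbb{D},\zeta)$. Extract a subsequence $(n_k)$ with $\mu_{n_k+1}/\mu_{n_k}\to+\infty$ and $n_{k+1}\ge n_k+2$, and fix $z_0$ with $|z_0|$ large. On the $k$-th \emph{active} block $(\mu_{n_k},\mu_{n_k+1}]$ I use the first $\mu_{n_k+1}-\mu_{n_k}-1$ coefficients to realize one step of a Nestoridis-type approximation, targeting the $k$-th pair $(K_k,h_k)$ of a dense enumeration at some intermediate index $N_k<\mu_{n_k+1}$; the final coefficient $a_{\mu_{n_k+1}}$ is then devoted to enforcing the single linear constraint $S_{\mu_{n_k+1}}(f,\zeta)(z_0)=0$. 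Outside the active blocks the coefficients are set to $0$. The approximations at the indices $N_k$ secure $f\in\mathcal{U}(\mathbb{D},\zeta)$. On the other hand, the only $\mu$-indices lying in passive blocks $(\mu_{n_k+1},\mu_{n_{k+1}}]$ are $\mu_{n_k+2},\ldots,\mu_{n_{k+1}}$, and for all such $j$ one gets $S_{\mu_j}(f,\zeta)(z_0)=S_{\mu_{n_k+1}}(f,\zeta)(z_0)=0$. Thus $\{S_{\mu_j}(f,\zeta)(z_0):j\ge1\}$ is bounded, so with $K=\{z_0\}$ and $h\equiv1$ one obtains a target inaccessible along $\mu$, whence $f\notin\mathcal{U}^{(\mu)}(\mathbb{D},\zeta)$.

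The main obstacle in the first implication is justifying that the approximating index $n$ may be placed inside an Ostrowski gap, which is the service Lemma~\ref{lemme_npf} must supply; one should also verify that its statement carries to an arbitrary center $\zeta\in\mathbb{D}$, not only to $0$. In the second implication, the delicate point is that the single corrective coefficient $a_{\mu_{n_k+1}}$ enforcing the vanishing condition at $z_0$ should neither spoil the approximation performed at $N_k$ nor disrupt convergence of $f$ in $H(\mathbb{D})$. Since the correction affects only partial sums of index $\ge\mu_{n_k+1}$ (while $N_k<\mu_{n_k+1}$), the approximation at $N_k$ remains untouched, and taking $|z_0|$ large ensures that $|a_{\mu_{n_k+1}}|$ decays like $|z_0-\zeta|^{-(\mu_{n_k+1}-N_k)}$, so that the coefficients form a power series of radius of convergence at least $1$, as required.
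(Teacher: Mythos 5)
Your overall strategy is the right one and mirrors the paper's (Ostrowski gaps for the easy direction, a Costakis--Tsirivas-style block construction for the converse), but there are two concrete problems.

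\textbf{On (ii) $\Rightarrow$ (i).} You invoke Lemma~\ref{lemme_npf} to ``arrange that $n$ lies in some gap $(p_m,q_m]$''. That is the wrong lemma: Lemma~\ref{lemme_npf} compares $S_{p_k}(f,\zeta)$ with $S_{p_k}(f,\zeta_0)$ for two \emph{centers}, and says nothing about placing approximating indices inside Ostrowski gaps. The statement you actually need is Lemma~\ref{lemme_OG}, the Melas--Nestoridis refinement of the Gehlen--Luh--M\"uller result: for $f\in\mathcal{U}(\mathbb{D},\zeta)$ and any $(K,h)$, one may choose Ostrowski gaps $(p_m,q_m)$ so that $S_{p_m}(f,\zeta)\to h$ on $K$. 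Taking $n=p_m$ streamlines the rest: since $q_m/p_m\to\infty$ while $\mu_{k+1}/\mu_k$ is eventually bounded by $M+1$, some $\mu_k$ lands in $[p_m,q_m)$, and then $S_{\mu_k}(f,\zeta)-S_{p_m}(f,\zeta)\to0$ uniformly on $K$ either by your direct coefficient estimate or by the paper's Lemma~\ref{lemme_OG2}. Your scheme for locating $\mu_k$ (comparing $n$ with $(M+1)p_m$) is correct but more complicated than the one-line observation above.

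\textbf{On (i) $\Rightarrow$ (ii).} Your construction is genuinely closer to the paper's proof of Theorem~\ref{rec_thm} than to the proof of Theorem~\ref{main_thm2}: the paper splits each active block into two halves, approximating $h_k$ in the first half and $-h_k$ in the second (using Theorem~\ref{thmCT}), so that the full block contributes a quantity that is uniformly small at $z_0$; you instead spend a single corrective coefficient $a_{\mu_{n_k+1}}$ to force $S_{\mu_{n_k+1}}(f,\zeta)(z_0)=0$ exactly. That variant can be made to work, but your justification of the radius of convergence is wrong. One has
\[
a_{\mu_{n_k+1}}=-\frac{S_{\mu_{n_k+1}-1}(f,\zeta)(z_0)}{(z_0-\zeta)^{\mu_{n_k+1}}},
\]
with the exponent $\mu_{n_k+1}$, not $\mu_{n_k+1}-N_k$, and the numerator is the value of the approximation polynomial at $z_0$, which is \emph{not} controlled by taking $\vert z_0\vert$ large --- on the contrary, it grows with $\vert z_0\vert$. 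To control it you must force $z_0$ into the approximation compact, i.e.\ work with $\tilde K_k=K_k\cup\{z_0\}$ so that the polynomial value at $z_0$ is close to $h_k(z_0)$, and in addition arrange the enumeration and the choice of block index so that $\vert h_k(z_0)\vert$ is at most polynomial in the block length (the paper imposes $\vert f_{q_l}(z_0)\vert\le w_{j_l}$, which gives $\vert a_{1+w_{j_l}}\vert\le 2(1+w_{j_l})$). Without those two devices the bound on the corrective coefficient --- and hence the radius of convergence $\ge1$ --- is unjustified. Finally, note that a bare ``Nestoridis-type'' approximation controls the valuation and degree of $P_k$ and its size on $K$, but not its size on $\{\vert z\vert\le r\}$; the estimate $\sup_{\vert z\vert\le r}\vert P_k\vert=O(\theta^{\tau_k})$ from Theorem~\ref{thmCT} is precisely what makes $\sum_k P_k$ converge in $H(\mathbb{D})$ and should be invoked explicitly.
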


To prove the implication $(\ref{2ass1})\Rightarrow (\ref{1ass1})$ we use in an essential way the fact that all universal Taylor series possess 
Ostrowski-gaps. To obtain the converse implication, we employ a constructive method based on a Bernstein-Walsh type theorem 
given by Costakis and Tsirivas (see Theorem 
\ref{thmCT} below), when they studied the phenomenon of disjoint universal Taylor series \cite{CT}. As consequence of  
Theorem \ref{main_thm} we show the independence of the class $\mathcal{U}^{(\mu)}(\mathbb{D},\zeta)$ with the center of expansion $\zeta$ provided that 
$\displaystyle\limsup_{n\rightarrow +\infty}\left(\frac{\mu_{n+1}}{\mu_n}\right)<+\infty.$ This phenomenon was already noticed in specific cases 
in \cite{Vlachou1, Vlachou2}. Furthermore, using a constructive method similar to that of the proof of Theorem \ref{main_thm} and 
the ideas of the new proof of \cite[Theorem 1]{Luh1986}, we obtain the following characterization. 

\begin{theorem}\label{main_thm_inde} 
Let $\zeta\in\mathbb{D}$, $\zeta\ne0$. Let $\mu=(\mu_n)$ be a strictly increasing sequence of positive integers. The following assertions 
are equivalent:
\begin{enumerate}[(i)] 
\item\label{1ass1} $\mathcal{U}^{(\mu)}(\mathbb{D},\zeta)=\mathcal{U}^{(\mu)}(\mathbb{D},0)$
\item\label{2ass1} $\displaystyle\limsup_{n\rightarrow +\infty}\left(\frac{\mu_{n+1}}{\mu_n}\right)<+\infty.$
\end{enumerate}
\end{theorem}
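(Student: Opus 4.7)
The implication $(\ref{2ass1})\Rightarrow (\ref{1ass1})$ is immediate from Theorem \ref{main_thm}: when $\limsup_n \mu_{n+1}/\mu_n<+\infty$, that theorem yields $\mathcal{U}^{(\mu)}(\mathbb{D},0)=\mathcal{U}(\mathbb{D},0)$ and $\mathcal{U}^{(\mu)}(\mathbb{D},\zeta)=\mathcal{U}(\mathbb{D},\zeta)$, and the two right-hand sides coincide by the classical theorem recalled in the introduction \cite{GLM,MNadv}. All the content therefore lies in the converse, which I argue by contrapositive: assuming $\limsup_n \mu_{n+1}/\mu_n=+\infty$, I plan to construct $f\in \mathcal{U}^{(\mu)}(\mathbb{D},0)\setminus \mathcal{U}^{(\mu)}(\mathbb{D},\zeta)$.

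Extract a subsequence $(n_k)$ with $\mu_{n_k+1}/\mu_{n_k}\to+\infty$ and enumerate the admissible test pairs $(K_j,h_j)_{j\geq 1}$ (compacta $K_j\subset\{|z|\geq 1\}$ with connected complement, polynomials $h_j$ with coefficients in $\mathbb{Q}(i)$); fix in addition a compact $L\subset\{|z|\geq 1\}$ with connected complement, disjoint from all the $K_j$, together with a polynomial $g$, to play the role of forbidden target at center $\zeta$. I build $f=\sum_{k\geq 1}P_k$ inductively, with each $P_{k+1}$ a polynomial supported on exponents in a window $(\mu_{n_k},d_k]$ with $d_k\ll \mu_{n_k+1}$; set $f_k=P_1+\dots+P_k$. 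The key input is the Costakis--Tsirivas Bernstein--Walsh-type theorem (Theorem \ref{thmCT}), which at each step supplies such a $P_{k+1}$ satisfying simultaneously (a) $\|f_{k+1}-h_{j_k}\|_{K_{j_k}}<2^{-k}$, (b) $\|f_{k+1}-g\|_L\geq \delta$ for some fixed $\delta>0$, and (c) the size bound $\sup_{|z|\leq 1-2^{-k}}|P_{k+1}(z)|\leq 2^{-k}$. Condition (a) combined with (c) (via Cauchy estimates ensuring later blocks do not spoil the approximations on $K_{j_k}$) forces $f\in \mathcal{U}^{(\mu)}(\mathbb{D},0)$, the targets being attained at the indices $\mu_{n_k+1}$.

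For non-membership in $\mathcal{U}^{(\mu)}(\mathbb{D},\zeta)$, I invoke the identity
\[
S_n(f,\zeta)(z)-S_n(f,0)(z)=\sum_{j>n}a_j\,T_{n,j}(z),\qquad T_{n,j}(z)=\sum_{\ell=0}^{n}\binom{j}{\ell}\zeta^{j-\ell}(z-\zeta)^{\ell},
\]
together with the quantitative lower bounds on the translation polynomials $T_{n,j}$ supplied by the revisited proof of \cite[Theorem 1]{Luh1986} in Section \ref{preliminary_section} (Lemma \ref{lemme_npf}). For indices $\mu_n$ with $n\in[n_k+1,n_{k+1}]$ one has $S_{\mu_n}(f,0)=f_{k+1}$, hence $S_{\mu_n}(f,\zeta)=f_{k+1}+\sum_{j>\mu_n}a_j T_{\mu_n,j}$; the tail is controlled in sup-norm on $L$ by (c) and the Luh estimates, and (b) then yields $\|S_{\mu_n}(f,\zeta)-g\|_L\geq \delta/2$. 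For the remaining indices $\mu_{n_k}$, the analogous analysis isolates the contribution of the window $(\mu_{n_k},d_k]$ of $P_{k+1}$, and the nontrivial lower bounds on $T_{\mu_{n_k},j}$---available precisely because $\zeta\neq 0$---give the same conclusion. The central technical hurdle is simultaneously imposing (a), (b), and this quantitative translation-tail control at the indices $\mu_{n_k}$: coefficients above $\mu_{n_k}$ are invisible to $S_{\mu_{n_k}}(f,0)$ on $K_{j_k}$ but drive $S_{\mu_{n_k}}(f,\zeta)$ through the Luh-type tail, and the explosive gap ratios $\mu_{n_k+1}/\mu_{n_k}\to+\infty$ supply the room to reconcile these competing demands.
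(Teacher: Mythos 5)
The direction $(\mathrm{ii})\Rightarrow(\mathrm{i})$ is fine and coincides with the paper's Corollary \ref{main_thm3}. The problem is the converse. Your plan builds $f$ so that \emph{every} $\mu$-indexed partial sum $S_{\mu_n}(f,0)$ equals one of the finite sums $f_j$, and you impose condition (b), namely $\|f_j-g\|_L\geq\delta$ for all $j$, on some fixed compact $L\subset\{|z|\geq1\}$ with connected complement and some fixed $g$. These two demands contradict the very membership $f\in\mathcal{U}^{(\mu)}(\mathbb{D},0)$ that you are simultaneously trying to secure through (a): universality at center $0$ forces a $\mu$-indexed subsequence $S_{\mu_{\lambda_n}}(f,0)\to g$ uniformly on $L$, since $(L,g)$ is itself one of the admissible test pairs (the enumeration $(K_j)$ is exhaustive, so $L$ sits inside some $K_j$; you cannot make $L$ ``disjoint from all the $K_j$''). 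So, as stated, (a) and (b) cannot hold together, and the construction cannot produce $f\in\mathcal{U}^{(\mu)}(\mathbb{D},0)\setminus\mathcal{U}^{(\mu)}(\mathbb{D},\zeta)$. There is also a secondary issue: for the indices $\mu_n$ at the top of a range $[n_k+1,n_{k+1}]$ there is essentially no Ostrowski gap between $\mu_n$ and the next block, so the ``Luh tail estimate'' you invoke to pass from $S_{\mu_n}(f,0)$ to $S_{\mu_n}(f,\zeta)$ need not be small unless you insert explicit zero-coefficient gaps whose length grows relative to $\mu_{n_{k+1}}$ — a structural choice your sketch never pins down.

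The paper's proof of this direction (Theorem \ref{rec_thm}) resolves exactly this tension with an idea your sketch does not have: a one-dimensional cancellation. Alongside each approximating block $P_l$ supported on $[1+v_{j_l},w_{j_l}]$ with $w_{j_l}=\mu_{n_{j_l}+1}$ a $\mu$-index, the paper adds a single monomial $R_l(z)=a_{1+w_{j_l}}z^{1+w_{j_l}}$ sitting \emph{just above} the cut $w_{j_l}$. The key decoupling is that this coefficient is invisible to $S_{w_{j_l}}(f,0)$ (so the universal approximation at center $0$ is untouched), yet it does enter $S_{w_{j_l}}(f,\zeta)$ through the binomial re-expansion, contributing $a_{1+w_{j_l}}\bigl(z_0^{1+w_{j_l}}-(z_0-\zeta)^{1+w_{j_l}}\bigr)$ at the distinguished boundary point $z_0$. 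Because $\zeta\neq0$, this factor is nonzero, so $a_{1+w_{j_l}}$ can be chosen to make the finite part of $S_{w_{j_l}}(f,\zeta)(z_0)$ exactly zero (equation (\ref{cons2_equ24})); the remaining tail is then killed by the Luh-type estimates across the built-in zero gap. This forces $S_{\mu_n}(f,\zeta)(z_0)\to0$ for all $n$, hence $f\notin\mathcal{U}^{(\mu)}(\mathbb{D},\zeta)$, while $S_{\mu_n}(f,0)$ remains free to realize the full universal approximation. Working at a single point $z_0$ (rather than on a compact $L$, as you propose) is what allows the cancellation to be carried out with a single free coefficient per block. Your ``lower bounds on $T_{\mu_{n_k},j}$'' heuristic actually points in the wrong direction: those quantities measure how strongly the next block pollutes $S_{\mu_{n_k}}(f,\zeta)$, and the paper does not bound them from below but rather neutralizes them by the choice of $a_{1+w_{j_l}}$.
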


We refer the reader to Corollary \ref{main_thm3} and Theorem \ref{rec_thm}. 
Finally in the last section we deal with the case of real universal Taylor series. 
  
\section{Universal Taylor series versus universal Taylor series with respect to a prescribed subsequence}

\subsection{Preliminary results}\label{preliminary_section} 
In this subsection we state some results that we will use for the proof of the main theorem. On one hand we are interested 
in the fact that all universal Taylor series possess Ostrowski-gaps. Actually a slightly more precise result holds (se \cite[Theorem 9.1]{MNadv}). 

\begin{lemma}\label{lemme_OG} Let $\zeta\in\mathbb{D}$. 
Let $f\in\mathcal{U}(\mathbb{D},\zeta)$. Let $K\subset\mathbb{C}\setminus\mathbb{D}$ be a compact set with connected complement and 
let $h\in A(K)$. Then there exist two sequences of positive integers $(p_m),$ $(q_m)$ such that 
\begin{enumerate}
\item the Taylor series of $f$ at $\zeta$ has Ostrowski-gaps $(p_m,q_m)$,
\item and $\displaystyle\sup_{z\in K}\vert S_{p_m}(f,\zeta)(z)-h(z)\vert\rightarrow 0$, as $m\rightarrow +\infty$.
\end{enumerate} 
\end{lemma}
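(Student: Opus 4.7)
The plan is to combine the universality of $f$ at $\zeta$ with the Bernstein--Walsh inequality applied to suitable polynomially convex enlargements of $K$, together with a diagonal extraction argument.

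First I would invoke universality to obtain a subsequence $(\lambda_n)$ with $S_{\lambda_n}(f,\zeta)\to h$ uniformly on $K$, and observe that on any closed disc $\bar{B}(\zeta,\rho)$ with $0<\rho<1-|\zeta|$ the partial sums converge to $f$ uniformly. The compact set $E=\bar{B}(\zeta,\rho)\cup K$ has connected complement (adding a small disc inside $\mathbb{D}$ to a set in $\{|z|\geq 1\}$ with connected complement does not destroy connectedness of the complement), so we obtain a uniform bound $\|S_{\lambda_n}(f,\zeta)\|_E\leq M$.

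To sharpen the Bernstein--Walsh estimate I would enlarge $K$ to a sequence $K^{(s)}\supset K$ of compact sets in $\{|z|\geq 1\}$ with connected complement and with $\mathrm{cap}(K^{(s)})\to\infty$ (for example, half-annuli whose outer radius tends to infinity). Extend $h$ to $h^{(s)}\in A(K^{(s)})$ (setting it to $0$ on the added part, separated from $K$ by a small gap to ensure $h^{(s)}$ is well-defined) and apply universality to the pair $(K^{(s)},h^{(s)})$ to obtain subsequences $(\lambda_n^{(s)})$ for which $S_{\lambda_n^{(s)}}(f,\zeta)$ is uniformly bounded on $E^{(s)}=\bar{B}(\zeta,\rho)\cup K^{(s)}$. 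The Bernstein--Walsh inequality on $E^{(s)}$ combined with Cauchy's estimate on a circle $|z-\zeta|=R$ yields, for $j\leq \lambda_n^{(s)}$,
\[|a_j|^{1/j}\leq M_s^{1/j}\,\Phi_{E^{(s)}}(R)^{\lambda_n^{(s)}/j}\,R^{-1},\]
where $\Phi_{E^{(s)}}(z)=\exp(g_{E^{(s)}}(z,\infty))$. Since $\Phi_{E^{(s)}}(R)/R\to 1/\mathrm{cap}(E^{(s)})$ as $R\to\infty$, a careful choice of $R$ large and then $s$ large makes the right-hand side arbitrarily small on any range of indices $j$ with $j/\lambda_n^{(s)}$ bounded below.

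Finally I would diagonalize: choose $s_m\to\infty$ and $n_m$ so that $p_m:=\lambda_{n_m}^{(s_m)}$ also lies in the universality subsequence for the original pair $(K,h)$, and select $q_m\in(p_m,\lambda_{n_m+1}^{(s_m)})$ with $q_m/p_m\to\infty$ while preserving the coefficient decay $|a_j|^{1/j}\to 0$ on $(p_m,q_m]$. The main obstacle is the simultaneous matching of $p_m$: it must be an index where $S_{p_m}(f,\zeta)$ already approximates $h$ on $K$ (condition (2) of the lemma), \emph{and} a Bernstein--Walsh subsequence index for the enlarged pair $(K^{(s_m)},h^{(s_m)})$. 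Reconciling these---exploiting that $h^{(s)}|_K=h$, so universality on $(K^{(s)},h^{(s)})$ automatically yields approximation of $h$ on $K$---is precisely the refinement of \cite[Theorem 9.1]{MNadv} over the classical Ostrowski-gaps theorem of Gehlen--Luh--M\"uller.
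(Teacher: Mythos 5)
The paper does not prove this lemma at all: it is taken verbatim from \cite[Theorem 9.1]{MNadv} (Melas--Nestoridis), and the text simply cites it. So there is no internal proof to compare against; I assess your sketch on its own terms.

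Your overall strategy (universality on enlarged compacta of growing capacity, Bernstein--Walsh plus Cauchy to force coefficient decay at the top of the approximating partial sums, then diagonalize) is the right circle of ideas, and the remark that $h^{(s)}|_K=h$ so that approximation on $K^{(s)}$ automatically gives approximation on $K$ is correct and relevant. But there are two genuine gaps. First, the quantitative step is off: from $|a_j|\leq M_s\,\Phi_{E^{(s)}}(R)^{N}/R^j$ you want to let $R\to\infty$ and use $\Phi_{E^{(s)}}(R)/R\to 1/\mathrm{cap}(E^{(s)})$, but for $j<N$ this gives $\Phi(R)^{N/j}/R\approx R^{N/j-1}/\mathrm{cap}(E^{(s)})^{N/j}$, which \emph{increases} with $R$; the correct scale is $R$ comparable to the diameter of $K^{(s)}$, where $\max_{|w-\zeta|=R}\Phi_{E^{(s)}}(w)$ is bounded by a constant $C_0$ independent of $s$ (by scaling), yielding $|a_j|^{1/j}\lesssim C_0^{N/j}/\sigma_s$. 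As written, the estimate does not close. Second, and more seriously, you insist that $p_m$ itself be a universality index for $(K^{(s_m)},h^{(s_m)})$, and you identify this ``simultaneous matching'' as the main obstacle without actually resolving it. With your set-up it is in fact \emph{not} achievable in general: the decay region produced by Bernstein--Walsh applied to $S_{\lambda}$ sits near the \emph{top} degree $\lambda$, so the natural choice is $q_m=\lambda$ (a universality index) and $p_m$ well below $q_m$ inside the decay region. There is no reason the universality subsequence contains consecutive terms $\lambda_{n},\lambda_{n+1}$ with a prescribed ratio, so forcing $p_m=\lambda_{n_m}^{(s_m)}$ imposes a constraint you cannot meet. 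The actual resolution is the opposite of what you suggest: $p_m$ need not be an approximating index a priori. Once the Ostrowski gap $(p_m,q_m]$ is established with $q_m$ an approximating index, Lemma~\ref{lemme_OG2} gives $S_{q_m}(f,\zeta)-S_{p_m}(f,\zeta)\to 0$ uniformly on compacta, and combining this with $S_{q_m}(f,\zeta)\to h$ on $K$ yields condition (2) for $p_m$ for free. As it stands, your sketch therefore has a wrong estimate and an unresolved (and, in your formulation, unresolvable) matching problem.
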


Combined Lemma \ref{lemme_OG} with the definition of universal Taylor series we immediately deduce the following useful lemma. 

\begin{lemma}\label{lemme_OG2} Let $\zeta\in\mathbb{D}$. 
Let $f$ be in $H(\mathbb{D})$ and suppose that the Taylor series of $f$ at $0$ has Ostrowski-gaps $(p_m,q_m)$. 
Then for every sequence $(r_m)$ with $p_m<r_m\leq q_m$ the difference between partial sums $S_{r_m}(f,\zeta)(z)-S_{p_m}(f,\zeta)(z)$ converges 
uniformly to zero (as $m\rightarrow +\infty$) on compact sets of $\mathbb{C}.$
\end{lemma}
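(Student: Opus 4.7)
The plan is to decompose $f$ according to the Ostrowski-gap structure at $0$: write $f = T_{p_m}(f) + \phi_m + R_m$, where $\phi_m(z) := \sum_{p_m < j \leq q_m} a_j z^j$ is the ``gap polynomial'' and $R_m(z) := \sum_{j > q_m} a_j z^j$ is the high-order tail. Since $T_{p_m}(f)$ is a polynomial of degree $p_m$, both $S_{p_m}(T_{p_m}(f),\zeta)$ and $S_{r_m}(T_{p_m}(f),\zeta)$ equal $T_{p_m}(f)$, so this low-order piece contributes $0$ to $D_m := S_{r_m}(f,\zeta) - S_{p_m}(f,\zeta)$. It then remains to control the two other contributions uniformly on a given compact set $K \subset \mathbb{C}$, and I write $R := \max_{z \in K}|z - \zeta|$.

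For the $\phi_m$-contribution, the Ostrowski-gap hypothesis at $0$ yields, for any $\epsilon > 0$, a uniform bound $|a_j| \leq \epsilon^j$ valid for all $j \in (p_m, q_m]$ and all sufficiently large $m$. Expanding $\phi_m$ around $\zeta$, the $k$-th Taylor coefficient (for $p_m < k \leq r_m$) equals $\sum_{j=k}^{q_m}\binom{j}{k} a_j \zeta^{j-k}$, which, using $\binom{j}{k}\leq 2^j$ and summing a geometric series, is bounded by $(2\epsilon)^k/(1 - 2\epsilon|\zeta|)$. Picking $\epsilon$ so that $2\epsilon R < 1$, I would then bound $|S_{r_m}(\phi_m,\zeta)(z) - S_{p_m}(\phi_m,\zeta)(z)|$ on $K$ by a geometric tail of order $(2\epsilon R)^{p_m+1}$, which tends to $0$ as $m \to \infty$.

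For the $R_m$-contribution, the key observation is that $R_m(w)/w^{q_m+1}$ is holomorphic on $\mathbb{D}$, so the maximum principle gives $|R_m(w)| \leq (|w|/r)^{q_m+1}\|R_m\|_{|u|=r}$ for $|w| \leq r < 1$, with $\|R_m\|_{|u|=r} \leq \sum_{j>q_m}|a_j| r^j \to 0$ as a convergent-series tail. Applying Cauchy's formula on a circle $|w-\zeta| = \rho$ with $|\zeta|+\rho < r$ then bounds the $k$-th Taylor coefficient of $R_m$ at $\zeta$ by $((|\zeta|+\rho)/r)^{q_m+1}\,\|R_m\|_{|u|=r}\,\rho^{-k}$, combining an exponentially small factor in $q_m$ with a Cauchy factor $\rho^{-k}$. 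The main obstacle is then balancing this Schwarz-type decay in $q_m$ against the polynomial growth $(|z-\zeta|/\rho)^{r_m}$ when summing over $p_m < k \leq r_m$: because $r_m$ may be as large as $q_m$ and hence grow much faster than $p_m$, a classical Bernstein--Walsh extrapolation from a small disc around $\zeta$ is too crude, and instead one must push the free parameters $r$ and $\rho$ close to their respective limits ($r \to 1^-$ and $\rho \to (r - |\zeta|)^-$) so that the exponential decay in $q_m$ dominates uniformly in $z \in K$.
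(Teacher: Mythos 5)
Your decomposition $f = T_{p_m}(f) + \phi_m + R_m$ is natural, and you handle $T_{p_m}(f)$ and $\phi_m$ correctly; but the $R_m$-obstacle you flag is not a calibration issue that disappears as $r\to 1^-$, $\rho\to(r-|\zeta|)^-$. In that limit the decisive factor in your bound, $\bigl(\frac{(|\zeta|+\rho)R}{r\rho}\bigr)^{q_m}$, becomes $\bigl(\frac{R}{1-|\zeta|}\bigr)^{q_m}$, which blows up as soon as $K$ reaches distance $R>1-|\zeta|$ from $\zeta$ --- that is, for essentially every compact set that matters here. This gap cannot be closed, because the statement read literally (Ostrowski gaps of the Taylor series at $0$, partial sums at a different centre $\zeta\ne 0$, $K\subset\mathbb{C}$ arbitrary) is false. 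Take $f(z)=\sum_{m\ge 1}z^{m!}$, which has Ostrowski gaps $(p_m,q_m)=(m!,(m+1)!-1)$ for $m\ge 2$; take $\zeta=1/2$, $r_m=q_m$ and $z_0=2$. Every coefficient $b_k=f^{(k)}(1/2)/k!$ is nonnegative, so there is no cancellation in $S_{q_m}(f,\zeta)(z_0)-S_{p_m}(f,\zeta)(z_0)=\sum_{k=p_m+1}^{q_m}b_k(3/2)^k$, and the top term alone gives $b_{q_m}(3/2)^{q_m}\ge\binom{(m+1)!}{(m+1)!-1}\cdot\frac{1}{2}\cdot(3/2)^{(m+1)!-1}=\frac{(m+1)!}{2}\,(3/2)^{(m+1)!-1}\to+\infty$.

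What the paper actually needs --- and how the lemma is invoked in the proof of Theorem \ref{main_thm2} --- is the version where the Ostrowski gaps and the partial sums are taken at the \emph{same} centre: Lemma \ref{lemme_OG} produces gaps of the Taylor series of $f\in\mathcal{U}(\mathbb{D},\zeta)$ around $\zeta$, and the partial sums compared are $S_{p_m}(f,\zeta)$ and $S_{\mu_{j_m}}(f,\zeta)$. With this reading the lemma is a one-line consequence of Definition \ref{ogapcomplex}: writing $b_k$ for the coefficients of the Taylor series of $f$ at $\zeta$, one has $S_{r_m}(f,\zeta)(z)-S_{p_m}(f,\zeta)(z)=\sum_{k=p_m+1}^{r_m}b_k(z-\zeta)^k$, every index $k$ appearing in this sum lies in the gap set $I$, hence $|b_k|\le\varepsilon^k$ for $m$ large, and for $|z-\zeta|\le R$ and $\varepsilon<1/(2R)$ the sum is bounded by $\sum_{k>p_m}2^{-k}\to 0$. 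None of the three pieces $T_{p_m}(f)$, $\phi_m$, $R_m$ is needed, and the tail $R_m$ --- precisely what makes the printed version fail --- never enters. The genuinely non-trivial two-centre comparison is the separate Lemma \ref{lemme_npf}, and there the truncation index is kept at $p_m$ precisely so that $p_m/q_m\to 0$ can absorb the tail.
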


In connection with the Ostrowski-gaps, we also have the following result (\cite[Theorem 1]{Luh1986} or \cite[Lemma 9.2]{MNadv}). 
The published proof uses the Hadamard three-circle theorem. Here we give an elementary proof of the result.    

\begin{lemma}\label{lemme_npf} Let $f$ be in $H(\mathbb{D})$ and $\zeta_0,\zeta\in\mathbb{D}$. 
Suppose that the Taylor series of $f$ at $\zeta_0$ has Ostrowski-gaps $(p_m,q_m)$. 
Then the difference $S_{p_k}(f,\zeta)(z)-S_{p_k}(f,\zeta_0)(z)$ converges to zero 
(as $k\rightarrow +\infty$) uniformly on compact sets of $\mathbb{D}\times\mathbb{C}$ ($\zeta\in \mathbb{D}$, $z\in\mathbb{C}$). 
\end{lemma}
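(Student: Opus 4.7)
The strategy is to exploit the Ostrowski-gap structure at $\zeta_0$ through the decomposition $f = T_{q_k} + R_{q_k}$, where $T_{q_k}(w) := \sum_{j=0}^{q_k} a_j(w-\zeta_0)^j$ is the $q_k$-th partial sum at $\zeta_0$ and $R_{q_k} := f - T_{q_k}$. Using the elementary identities $S_{p_k}(f,\zeta_0) = T_{p_k}$, $S_{p_k}(R_{q_k},\zeta_0) = 0$, and $S_{p_k}(T_{p_k},\zeta) = T_{p_k}$ (since $T_{p_k}$ has degree $\leq p_k$), linearity of the partial-sum operator yields
$$\Delta_{p_k}(\zeta,z) := S_{p_k}(f,\zeta)(z) - S_{p_k}(f,\zeta_0)(z) = S_{p_k}(G_k,\zeta)(z) + S_{p_k}(R_{q_k},\zeta)(z),$$
where $G_k := T_{q_k} - T_{p_k} = \sum_{j=p_k+1}^{q_k} a_j(w-\zeta_0)^j$ is the ``gap polynomial''. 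The two summands are then handled independently via Cauchy's integral formula for partial sums.

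For $S_{p_k}(G_k,\zeta)$ I use that $G_k$ is entire and that, by the Ostrowski-gap hypothesis, $|a_j|\leq \epsilon^j$ for any given $\epsilon>0$ and all large $j$ in $(p_k,q_k]$. Applying Cauchy on a circle $|w-\zeta|=\rho$ with $\rho$ chosen larger than $\sup\{|z-\zeta|\}$ over the target compact gives $|S_{p_k}(G_k,\zeta)(z)|^{1/p_k}\to 0$ uniformly on compact subsets of $\mathbb{D}\times\mathbb{C}$. For $S_{p_k}(R_{q_k},\zeta)$ I take the contour $\gamma = \partial D(\zeta_0,\sigma)$ with $\sigma<1-|\zeta_0|$: a plain Cauchy estimate on $f$ at $\zeta_0$ gives $\|R_{q_k}\|_\gamma \leq C(\sigma/\sigma')^{q_k}$ for $\sigma<\sigma'<1-|\zeta_0|$, which is $o(t^{p_k})$ for every $t>0$ because $q_k/p_k\to+\infty$. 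Inserted in the Cauchy formula and combined with a geometric bound on the kernel $\sum_{i=0}^{p_k}(z-\zeta)^i/(w-\zeta)^{i+1}$, this yields $|S_{p_k}(R_{q_k},\zeta)(z)|^{1/p_k}\to 0$ uniformly on $K_1\times K_2$, provided $\gamma$ encloses $\zeta$, i.e.\ $|\zeta-\zeta_0|<1-|\zeta_0|$. This proves the lemma for $\zeta$ in any compact subset of $D(\zeta_0,1-|\zeta_0|)$.

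For $\zeta\in\mathbb{D}$ lying outside $D(\zeta_0,1-|\zeta_0|)$ I connect $\zeta$ to $\zeta_0$ by a finite chain $\zeta_0=\xi_0,\xi_1,\ldots,\xi_N=\zeta$ with $|\xi_{i+1}-\xi_i|<1-|\xi_i|$, chosen uniformly in $\zeta$ over a given compact $K_1\subset\mathbb{D}$ by a standard compactness argument. Writing $\Delta_{p_k}(\zeta,z)$ telescopically as $\sum_{i=0}^{N-1}[S_{p_k}(f,\xi_{i+1})(z)-S_{p_k}(f,\xi_i)(z)]$, I apply at each step the same decomposition with $\xi_i$ in place of $\zeta_0$. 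The tail piece $R_{q_k}^{(\xi_i)}=f-S_{q_k}(f,\xi_i)$ is again super-exponentially small on $\partial D(\xi_i,\sigma_i)$ (with $\sigma_i<1-|\xi_i|$) by plain Cauchy estimates on $f$ at $\xi_i$, while the gap piece $G_k^{(\xi_i)}=S_{q_k}(f,\xi_i)-S_{p_k}(f,\xi_i)$ is controlled by Lemma~\ref{lemme_OG2} applied at $\xi_i$. The hardest part of the proof will be to make this chain argument quantitative: one has to show that the rate of convergence supplied by Lemma~\ref{lemme_OG2} at the intermediate centres $\xi_i$ is strong enough to absorb the Cauchy-kernel factor $(|z-\xi_{i+1}|/\sigma_i)^{p_k+1}$ for $z$ ranging over arbitrarily large compacts $K_2\subset\mathbb{C}$, so that the telescoped sum still converges uniformly to zero on $K_1\times K_2$.
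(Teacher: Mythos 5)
Your decomposition $\Delta_{p_k} = S_{p_k}(G_k,\zeta) + S_{p_k}(R_{q_k},\zeta)$ is in fact the \emph{same} as the paper's splitting into $A_{1,m}+A_{2,m}$ in (\ref{LN_equ1}): the gap piece (coefficients of index $p_m+1,\dots,q_m$) and the tail piece (indices $\geq q_m+1$) are identical objects, written once in partial-sum operator form and once as double sums over coefficients. What differs is the estimation. You run Cauchy's integral representation for $S_{p_k}(\cdot,\zeta)$ (a large circle around $\zeta$ for $G_k$, a small circle around $\zeta_0$ for $R_{q_k}$), while the paper works directly with the coefficients, using ${k\choose j}\le 2^k$ for $A_{1,m}$ and the sharper ${k\choose j}\le(ek/j)^j$ for $A_{2,m}$. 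Both are legitimate, and your estimate of the gap piece is correct and global.

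The sticking point is the tail piece: since your Cauchy contour around $\zeta_0$ must sit inside $\mathbb{D}$, the estimate only reaches compacts of $D(\zeta_0,1-|\zeta_0|)$. But the paper removes this issue at the very start by normalizing $\zeta_0=0$, after which $D(\zeta_0,1-|\zeta_0|)=\mathbb{D}$ and your one-step argument already covers every compact of $\mathbb{D}$: just choose $\sigma<\sigma'<1$ above $\sup_{L}|\zeta|$. Under that normalization the entire chain construction --- and its self-acknowledged gap --- is unnecessary.

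If one nevertheless wanted to run your chain, your worry is aimed at the wrong place. The Cauchy-kernel factor $(\text{const})^{p_k}$ arises only in the estimate of $S_{p_k}(R_{q_k}^{(\xi_i)},\xi_{i+1})$, and there it is beaten by $(\sigma_i/\sigma_i')^{q_k}$ since $q_k/p_k\to\infty$, exactly as in your one-step estimate. For the gap piece $S_{p_k}(G_k^{(\xi_i)},\xi_{i+1})$ no quantitative rate is needed: choosing the contour $|w-\xi_{i+1}|=\rho$ with $\rho\geq 2\sup_{z\in K_2}|z-\xi_{i+1}|$ makes the geometric kernel sum bounded by $2/\rho$, so
$$\left|S_{p_k}\bigl(G_k^{(\xi_i)},\xi_{i+1}\bigr)(z)\right|\le 2\sup_{|w-\xi_{i+1}|\le\rho}\bigl|S_{q_k}(f,\xi_i)(w)-S_{p_k}(f,\xi_i)(w)\bigr|,$$
which tends to $0$ by Lemma \ref{lemme_OG2} applied with $\zeta=\xi_i$. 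So the chain does close with the tools you already have in hand; the gap in the proposal is one of execution rather than a missing idea, and it vanishes altogether once you adopt the paper's normalization.
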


\begin{proof} Without loss of generality, we can suppose that $\zeta_0=0$ (and $\zeta\ne\zeta_0$!). 
Let $K\subset\mathbb{C}$ and $L\subset\mathbb{D}$ be fixed compact sets. 
We define 
$$r=\sup_{\zeta\in L}\vert \zeta\vert\hbox{ and }M>\max(2,\sup\{\vert z-\zeta\vert :z\in K, \zeta\in L\}).$$ 
Let us choose $0<\varepsilon <1$ such that 
$$r(1+\varepsilon)<1\hbox{ and }2\varepsilon M<1.$$ 
We write, for all $\vert z\vert<1$, $f(z)=\displaystyle\sum_{k=0}^{+\infty}a_k z^k$. Thus, for all $j\geq 0,$ we have 
$f^{(j)}(\zeta)= j! \displaystyle\sum_{k=j}^{+\infty}a_k {k\choose j} \zeta^{k-j}.$ 
Using the equality $z^k=\displaystyle\sum_{l=0}^k a_k{k\choose l}(z-\zeta)^l\zeta^{k-l}$, we get, for all $\zeta\in L,$ 
\begin{equation}\label{LN_equ1}\begin{array}{rcl}\displaystyle S_{p_m}(f,\zeta)(z)-S_{p_m}(f)(z)&=&\displaystyle 
\sum_{j=0}^{p_m}\left(\sum_{k=j}^{+\infty}a_k{k\choose j}\zeta^{k-j}\right)(z-\zeta)^j-
\sum_{j=0}^{p_m}\left(\sum_{k=j}^{p_m}a_k{k\choose j}\zeta^{k-j}\right)(z-\zeta)^j\\
&=&\displaystyle\sum_{j=0}^{p_m}\left(\sum_{k=1+p_m}^{q_m}a_k{k\choose j}\zeta^{k-j}\right)(z-\zeta)^j\\&&\ \displaystyle+
\sum_{j=0}^{p_m}\left(\sum_{k=1+q_m}^{+\infty}a_k{k\choose j}\zeta^{k-j}\right)(z-\zeta)^j
\\&:=&
A_{1,m}(\zeta,z)+A_{2,m}(\zeta,z)\end{array}
\end{equation} 
Now we are going to estimate the series $A_{1,m}(\zeta,z)$and $A_{2,m}(\zeta,z)$. 
On one hand, by the triangle inequality and the inequality ${k\choose j}\leq 2^k$, we get
$$\sup_{\zeta\in L}\sup_{z\in K }\vert A_{1,m}(\zeta,z)\vert\leq
\sum_{j=0}^{p_m}\left(\sum_{k=1+p_m}^{q_m}\vert a_k\vert 2^kr^{k-j}\right) M^j.$$
Using the Ostrowski-gaps, one can find $m_1$ such that, for all $m\geq m_1$ and $1+p_m\leq k\leq q_m$,  
$\vert a_k\vert <\varepsilon^k.$ We deduce, using the property $M>2r$,
\begin{equation}\label{LN_equ2}
\sup_{\zeta\in L}\sup_{z\in K }\vert A_{1,m}(\zeta,z)\vert\leq
\sum_{j=0}^{p_m}\left(\sum_{k=1+p_m}^{q_m}(2\varepsilon r)^k\right) \left(\frac{M}{r}\right)^j\leq 
\frac{1}{1-2\varepsilon r}(2\varepsilon M)^{1+p_m}.
\end{equation}
On the other hand, we need the following classical inequality
\begin{equation}\label{LN_nouv_equ2}{k\choose j}\leq \frac{k^j}{j!}\leq \left(\frac{ek}{j}\right)^j
\end{equation} 
and the fact that, for $t< k$, the function $t\mapsto \left(\frac{ek}{t}\right)^t$ is increasing. 
It follows
\begin{equation}\label{LN_nouv_equ3}\sup_{\zeta\in L}\sup_{z\in K }\vert A_{2,m}(\zeta,z)\vert\leq
\sum_{j=0}^{p_m}\left(\sum_{k=1+q_m}^{+\infty}\vert a_k\vert e^{p_m}\left(\frac{k}{p_m}\right)^{p_m}r^{k-j}\right)M^j.
\end{equation}
Since $p_m/q_m\rightarrow 0,$ as $m$ tends to infinity, we have, for all $k\geq 1+q_m$ and $0\leq j\leq p_m$,
$$\begin{array}{rcl}\displaystyle 
e^{\frac{p_m}{k-j}}e^{\frac{p_m}{k-j}\log (\frac{k}{p_m})}&\leq &\displaystyle 
e^{\frac{p_m}{1+q_m-p_m}}e^{\frac{p_m}{k(1-j/k)}\log(\frac{k}{p_m})}\\&\leq&\displaystyle 
e^{\frac{p_m}{1+q_m-p_m}}e^{-\frac{p_m}{k(1-\frac{p_m}{1+q_m})}\log(\frac{p_m}{k})}\rightarrow 1\hbox{ as }m\rightarrow +\infty.
\end{array}$$
Therefore, since $\limsup \vert a_k\vert^{1/k}\leq 1$ and $p_m/q_m\rightarrow 0,$ as $m$ tends to infinity, there exists a positive integer 
$m_2$ such that for all $m\geq m_2$, $k\geq 1+q_m$ and $0\leq j\leq p_m$, 
$$\left(\frac{ke}{p_m}\right)^{p_m/(k-j)}\vert a_k\vert^{1/(k-j)}r\leq (1+\varepsilon)r.$$
Since the choice of $\varepsilon$ ensures $(1+\varepsilon)r<1$ and $M>(1+\varepsilon)r$, we deduce that for all $m\geq m_2$
\begin{equation}\label{LN_equ3}\begin{array}{rcl}\displaystyle
\sup_{\zeta\in L}\sup_{z\in K }\vert A_{2,m}(\zeta,z)\vert&\leq&\displaystyle 
\sum_{j=0}^{p_m}\left(\sum_{k=1+q_m}^{+\infty}(1+\varepsilon)^{k-j}r^{k-j}\right)M^j\\&
\leq&\displaystyle \frac{\left((1+\varepsilon)r\right)^{1+q_m}}{1-(1+\varepsilon)r}\sum_{j=0}^{p_m}\left(\frac{M}{(1+\varepsilon)r}\right)^{j}\\
&\leq&\displaystyle \frac{1+p_m}{1-(1+\varepsilon)r}\left(\frac{M}{(1+\varepsilon)r}\right)^{p_m}\left((1+\varepsilon)r\right)^{1+q_m}.\end{array}
\end{equation}
Finally taking into account $p_m/q_m\rightarrow 0$ again and combining (\ref{LN_equ1}) with (\ref{LN_equ2}) and (\ref{LN_equ3}), we derive 
$$\sup_{\zeta\in L}\sup_{z\in K }\left\vert S_{p_m}(f,\zeta)(z)-S_{p_m}(f)(z)\right\vert\rightarrow 0,\hbox{ as }m\rightarrow +\infty.$$
\end{proof}

Notice that the statements of \cite[Theorem 1]{Luh1986} or \cite[Lemma 9.2]{MNadv} are given in the more general case where 
the open unit disc $\mathbb{D}$ is replaced by a simply connected domain $\Omega$ with $\Omega\subset\mathbb{C}$. Obviously the same proof 
does the job with easy modifications.\\ 

On the other hand, we will need a specific version of Bernstein-Walsh theorem. It is a polynomial approximation theorem 
which allows in some sense to control both the degree and the valuation of the polynomials. This elegant statement was given in 
\cite{CT}. For given sequence $(x_n)$, $(y_n)$ of positive real numbers, the notation $y_n=O(x_n)$ means that the sequence 
$(y_n/x_n)$ is bounded.
   
\begin{theorem}\label{thmCT}\cite[Theorem 2.1]{CT} Let $(\sigma_n),$ $(\tau_n)$ be strictly increasing sequences 
of positive integers, let $K\subset\mathbb{C}\setminus\mathbb{D}$ be a compact set with connected complement 
and let $r\in (0,1)$. If $1\leq \tau_n/\sigma_n\rightarrow +\infty$ as $n\rightarrow +\infty$ and if $U$ 
is open in $\mathbb{C}$ with $K\subset U$, then there is $\theta\in (0,1)$ so that for every $h\in H(U)$ there 
exists a sequence of polynomials $(P_n)$ of the form\quad 
$$P_n(z)=\displaystyle\sum_{k=\sigma_n}^{\tau_n}c_{n,k}z^k$$ 
\begin{flalign*}&\hbox{with}\quad \quad \quad\quad\quad\quad
\sup_{z\in K}\vert h(z)-P_n(z)\vert =O(\theta^{\tau_n})\quad\hbox{ and }\quad 
\sup_{\vert z\vert\leq r}\vert P_n(z)\vert=O(\theta^{\tau_n}).&
\end{flalign*}
\end{theorem}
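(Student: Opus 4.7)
My plan is to enlarge $K$ by adjoining the closed disk $\overline{D(0,r)}$, apply a classical Bernstein--Walsh polynomial approximation to a piecewise-defined function on the enlarged compact set, and then kill the low-degree coefficients of the resulting polynomials by means of a Cauchy estimate on the circle $|z|=r$.

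First I set $\tilde K := K\cup\overline{D(0,r)}$ and check that $\mathbb{C}\setminus\tilde K$ is still connected: since $K\subset\{|z|\ge 1\}$ and $\mathbb{C}\setminus K$ is connected, any path in $\mathbb{C}\setminus K$ that meets $\overline{D(0,r)}$ can be rerouted along a circle $\{|z|=r'\}$ with $r<r'<1$ (such a circle lies in $\mathbb{C}\setminus K$) so as to avoid $\overline{D(0,r)}$, producing a path in $\mathbb{C}\setminus\tilde K$. Next, using that $\mathrm{dist}(K,\overline{D(0,r)})\ge 1-r>0$, I shrink $U$ if necessary so that $U$ is disjoint from some disk $D(0,r_1)$ with $r<r_1<1$. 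Setting $W:=U\cup D(0,r_1)$ and defining $H\in H(W)$ by $H=h$ on $U$ and $H=0$ on $D(0,r_1)$ gives a function holomorphic on the open neighborhood $W$ of $\tilde K$. Classical Bernstein--Walsh (applicable because $\tilde K$ has connected complement) now produces $\rho\in(0,1)$ depending only on $\tilde K$ and $W$, and polynomials $Q_n$ of degree at most $\tau_n$ satisfying
$$\sup_{z\in K}|h(z)-Q_n(z)|=O(\rho^{\tau_n})\quad\text{and}\quad \sup_{|z|\le r}|Q_n(z)|=O(\rho^{\tau_n}).$$

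To achieve the prescribed form $P_n(z)=\sum_{k=\sigma_n}^{\tau_n}c_{n,k}z^k$, I write $Q_n(z)=\sum_{k=0}^{\tau_n}a_{n,k}z^k$ and use Cauchy's formula on $|z|=r$ to obtain $|a_{n,k}|\le C\rho^{\tau_n}/r^k$. Then I set $L_n(z):=\sum_{k=0}^{\sigma_n-1}a_{n,k}z^k$ and $P_n:=Q_n-L_n$. Writing $M=\max_{z\in K}|z|$, straightforward estimates give $\sup_K|L_n|=O\!\left(\rho^{\tau_n}(M/r)^{\sigma_n}\right)$ and $\sup_{|z|\le r}|L_n|=O(\sigma_n\rho^{\tau_n})$. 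Since $\tau_n/\sigma_n\to+\infty$ forces $\sigma_n/\tau_n\to 0$, for any $\theta\in(\rho,1)$ one has $(M/r)^{\sigma_n}=O((\theta/\rho)^{\tau_n})$; combining with the estimates on $Q_n$ yields both conclusions with this single $\theta$, which depends only on $K$, $U$ and $r$ as required.

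The main obstacle is the tension between the valuation constraint and the rate constraint: discarding the low-order coefficients inflates their contribution on $K$ by a factor $(M/r)^{\sigma_n}$, and this sub-exponential blow-up is absorbed into a geometric rate only because the hypothesis forces $\sigma_n/\tau_n\to 0$. For this reason $\theta$ must be chosen strictly larger than the Bernstein--Walsh rate $\rho$. The preliminary topological step, ensuring that $\tilde K$ has connected complement, is a small but essential ingredient so that Bernstein--Walsh can be invoked on $\tilde K$ in the first place.
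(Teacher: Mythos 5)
The paper quotes this result verbatim from Costakis--Tsirivas \cite{CT} and supplies no proof of its own, so there is no in-paper argument to compare against; I will therefore assess your proof on its own terms. Your argument is correct and complete, and it is the natural route to the statement: adjoin $\overline{D(0,r)}$ to $K$, extend $h$ by $0$ across the annular gap $\{r<|z|<1\}$ (which is legitimate because one may first replace $U$ by $U\cap\{|z|>r_1\}$ without losing $K$), invoke classical Bernstein--Walsh on $\tilde K$ to get degree-$\tau_n$ polynomials $Q_n$ with geometric error $O(\rho^{\tau_n})$ simultaneously on $K$ and on $\overline{D(0,r)}$, and then delete the bottom $\sigma_n$ coefficients using the Cauchy bound $|a_{n,k}|\le C\rho^{\tau_n}r^{-k}$ coming from the smallness of $Q_n$ on $|z|=r$. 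The three points that genuinely need care are all present and correctly handled: (a) $\mathbb{C}\setminus\tilde K$ is connected, which you justify (informally but correctly) by rerouting along a circle $\{|z|=r'\}$ with $r<r'<1$ that lies in the annulus separating the two pieces; a fully rigorous version just picks $r'$ below the moduli of the two endpoints and replaces the portion of the path with $|\gamma(t)|\le r'$ by an arc of that circle; (b) the Bernstein--Walsh rate $\rho$, and hence $\theta$, depends only on $\tilde K$ and the neighborhood $W$, not on $h$, so $\theta$ is chosen first as required; (c) the truncation inflates the error on $K$ by $(M/r)^{\sigma_n}$ with $M=\max_K|z|\ge 1>r$, and this is absorbed into any $\theta\in(\rho,1)$ precisely because $\sigma_n/\tau_n\to 0$, while on $|z|\le r$ the extra cost is only the harmless polynomial factor $\sigma_n$. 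One remark worth making explicit, though it does not affect correctness: since $\tilde K\supset\overline{D(0,r)}$, the compact $\tilde K$ is automatically non-polar, so the quantitative Bernstein--Walsh theorem applies without any degeneracy even if $K$ itself were very thin. Overall this is a clean, self-contained proof of the cited lemma.
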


\subsection{Proof of Theorem \ref{main_thm}} 
In order to simplify the notations, we write the proof for the class $\mathcal{U}(\mathbb{D},0)$. The proof works along the same lines in the case 
of the class $\mathcal{U}(\mathbb{D},\zeta),$ $\zeta\in\mathbb{D}$. 

\begin{theorem}\label{main_thm2} Let $\mu=(\mu_n)$ be a strictly increasing sequence of positive integers. The following assertions 
are equivalent:
\begin{enumerate}[(i)] 
\item\label{1ass} $\mathcal{U}(\mathbb{D},0)=\mathcal{U}^{(\mu)}(\mathbb{D},0)$
\item\label{2ass} $\displaystyle\limsup_{n\rightarrow +\infty}\left(\frac{\mu_{n+1}}{\mu_n}\right)<+\infty.$
\end{enumerate}
\end{theorem}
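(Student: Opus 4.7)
The plan splits into the two implications, with $(ii)\Rightarrow(i)$ relying on the Ostrowski-gap toolkit already developed and $(i)\Rightarrow(ii)$ requiring a direct construction.

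For $(ii)\Rightarrow(i)$, let $f\in\mathcal{U}(\mathbb{D},0)$, a compact set $K\subset\mathbb{C}\setminus\mathbb{D}$ with connected complement, and $h\in A(K)$ be given. Lemma \ref{lemme_OG} furnishes Ostrowski-gaps $(p_m,q_m)$ of the Taylor series of $f$ at $0$ together with $S_{p_m}(f)\to h$ uniformly on $K$. Assuming $\mu_{n+1}/\mu_n\leq C$ for $n$ large, let $\lambda_m$ be the smallest index with $\mu_{\lambda_m}>p_m$; then $\mu_{\lambda_m-1}\leq p_m$, so $\mu_{\lambda_m}\leq C\mu_{\lambda_m-1}\leq Cp_m$, while $q_m/p_m\to+\infty$ forces $\mu_{\lambda_m}\leq q_m$ for $m$ large. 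Lemma \ref{lemme_OG2} applied with $r_m:=\mu_{\lambda_m}\in(p_m,q_m]$ then yields $S_{\mu_{\lambda_m}}(f)-S_{p_m}(f)\to 0$ uniformly on compact sets of $\mathbb{C}$, so $S_{\mu_{\lambda_m}}(f)\to h$ uniformly on $K$ and $f\in\mathcal{U}^{(\mu)}(\mathbb{D},0)$.

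For $(i)\Rightarrow(ii)$, I argue contrapositively by constructing $f\in\mathcal{U}(\mathbb{D},0)\setminus\mathcal{U}^{(\mu)}(\mathbb{D},0)$. Extract $(n_k)$ with $\mu_{n_k+1}/\mu_{n_k}\to+\infty$ and set $\sigma_k:=\mu_{n_k}+1$, $\tau_k:=\mu_{n_k+1}$, so $\tau_k/\sigma_k\to+\infty$. Fix a compact $K_0\subset\mathbb{C}\setminus\mathbb{D}$ with connected complement, for instance a single point $w_0$ with $|w_0|>1$, and enumerate a countable family $\{(L_j,g_j)\}_{j\geq 1}$, dense for the universal approximation problem, with each $L_j\subset\mathbb{C}\setminus\mathbb{D}$ a compact of connected complement disjoint from $K_0$ such that $L_j\cup K_0$ also has connected complement, and $g_j$ a polynomial. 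Inductively pick $k_1<k_2<\cdots$ and apply Theorem \ref{thmCT} to the compact $L_j\cup K_0$, radius $r_j:=1-1/j$, and the target $H_j$ equal to $g_j-\sum_{i<j}P_{k_i}$ on a neighborhood of $L_j$ and $0$ on a neighborhood of $K_0$ (holomorphic on the disjoint union of these neighborhoods). This yields a polynomial $P_{k_j}$ with monomials in $[\sigma_{k_j},\tau_{k_j}]$ for which $\sup_{L_j\cup K_0}|P_{k_j}-H_j|$ and $\sup_{|z|\leq r_j}|P_{k_j}|$ are $O(\theta_j^{\tau_{k_j}})$ with $\theta_j\in(0,1)$; choose $k_j$ large enough that all these norms fall below $2^{-j}$. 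Set $f:=\sum_{j\geq 1}P_{k_j}\in H(\mathbb{D})$.

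The universality $f\in\mathcal{U}(\mathbb{D},0)$ follows because $S_{\tau_{k_j}}(f)=\sum_{i\leq j}P_{k_i}$ is within $2^{-j}$ of $g_j$ on $L_j$, combined with Mergelyan density of polynomials in $A(L)$. For the failure of $\mathcal{U}^{(\mu)}$, observe that $\mu_{n_{k_j}}=\sigma_{k_j}-1$ and $\mu_{n_{k_j}+1}=\tau_{k_j}$ are consecutive terms of $(\mu_n)$, so no $\mu_n$ lies in the open interval $(\sigma_{k_j},\tau_{k_j})$; consequently $S_{\mu_n}(f)=\sum_{i\leq j(n)}P_{k_i}$ for a uniquely determined $j(n)$, with $j(n)\to+\infty$. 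The summability $\sum_j\sup_{K_0}|P_{k_j}|\leq\sum_j 2^{-j}<+\infty$ then gives $S_{\mu_n}(f)\to F:=\sum_j P_{k_j}$ uniformly on $K_0$, so any $h_0\in A(K_0)$ with $h_0\not\equiv F$ cannot be approximated uniformly on $K_0$ by any subsequence of $(S_{\mu_n}(f))$, whence $f\notin\mathcal{U}^{(\mu)}(\mathbb{D},0)$.

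The main obstacle is the quantitative calibration of the recursive step: in Theorem \ref{thmCT} the rate $\theta_j\in(0,1)$ depends on $L_j\cup K_0$ and $r_j$ but is independent of the target, while $H_j=g_j-\sum_{i<j}P_{k_i}$ is a polynomial whose sup-norm on $L_j$ may be arbitrarily large. The resolution is that $k_j$, and hence $\tau_{k_j}$, may be taken as large as desired (since $(n_k)$ tends to infinity), so $\theta_j^{\tau_{k_j}}$ eventually dominates the sup-norm of $H_j$ and yields the required $2^{-j}$ bound. A secondary care is to keep the enumeration $\{(L_j,g_j)\}$ dense for the universal approximation problem despite the constraints $L_j\cap K_0=\emptyset$ and $L_j\cup K_0$ having connected complement; taking $K_0$ to be a single point makes this essentially automatic, as any standard basic compact set in $\mathbb{C}\setminus\mathbb{D}$ with connected complement can be chosen to avoid $K_0$ without losing approximation power.
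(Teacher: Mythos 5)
Your argument for $(ii)\Rightarrow(i)$ is correct and is essentially the paper's: you locate a $\mu$-index inside the Ostrowski gap $(p_m,q_m]$ using the bounded ratio, then invoke Lemma \ref{lemme_OG2} to transfer the approximation at $p_m$ to the chosen $\mu$-index.

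The direction $(i)\Rightarrow(ii)$, however, has a genuine gap: the function $f$ you build is not shown to lie in $\mathcal{U}(\mathbb{D},0)$. The issue is structural. You use a \emph{single} polynomial $P_{k_j}$ per block, supported in $[\sigma_{k_j},\tau_{k_j}]$, and the approximation of the target $g_j$ on $L_j$ is achieved exactly at the partial sum index $\tau_{k_j}=\mu_{n_{k_j}+1}$, which is a term of $\mu$. To defeat $\mathcal{U}^{(\mu)}$ you then force $S_{\tau_{k_j}}(f)$ to be near $0$ at $K_0=\{w_0\}$, and you make every $L_j$ disjoint from $w_0$. Consequently the family $\{L_j\}$ is not exhaustive: no $L_j$ contains $w_0$, so there is no mechanism in the construction that yields universal approximation on any compact $K$ with $w_0\in K$ (in particular on $K=\{w_0\}$). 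The only partial sums whose values at $w_0$ you control are $S_{\tau_{k_j}}(f)$ and $S_n(f)$ for $n$ outside the supports, and those are all uniformly bounded at $w_0$; the partial sums with $n$ strictly inside $[\sigma_{k_j},\tau_{k_j}-1]$ are simply uncontrolled. Thus universality on compacts containing $w_0$ is not established, and cannot be, by this single-polynomial scheme: making $S_{\tau_{k_j}}(f)(w_0)$ dense (which one would need for universality) would simultaneously destroy the boundedness you use to exclude $\mathcal{U}^{(\mu)}$, since $\tau_{k_j}\in\mu$.

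The paper circumvents exactly this tension by a two-polynomial decomposition of each block. Choosing $v_j\approx\sqrt{u_j w_j}$ so that both $v_j/u_j\to\infty$ and $w_j/(1+v_j)\to\infty$, it builds, via Theorem \ref{thmCT} applied twice, a pair $P_s$ supported in $[u_{j_s},v_{j_s}]$ and $\tilde P_s$ supported in $[v_{j_s}+1,w_{j_s}]$, with $P_s$ approximating $f_{q_s}$ and $\tilde P_s$ approximating $-f_{q_s}$, both on the compact $\tilde K_{m_s}=K_{m_s}\cup\{z_0\}$ where $(K_m)$ is exhaustive. Then $S_{v_{j_s}}(f)$ realizes the universal approximation on the exhaustive $K_{m_s}$ (which may contain $z_0$) at an index $v_{j_s}$ lying strictly inside the gap $(\mu_{n_{j_s}},\mu_{n_{j_s}+1})$, hence not in $\mu$; while $S_{w_{j_s}}(f)$, and more generally every $S_{\mu_n}(f)$, equals $\sum_{i\le l}(P_i+\tilde P_i)$ for some $l$ and is uniformly small at $z_0$. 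This is what separates $\mathcal{U}(\mathbb{D},0)$ from $\mathcal{U}^{(\mu)}(\mathbb{D},0)$: the approximation index is decoupled from $\mu$. To repair your proof you should split each block at the geometric-mean index and introduce the cancelling polynomial $\tilde P_s$, which also lets you take your $L_j$'s to be an honest exhaustive family (augmented by $\{z_0\}$) rather than one forced to avoid the witness point.
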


\begin{proof}  $(\ref{2ass})\Rightarrow(\ref{1ass})$: assume that $\displaystyle\limsup_{n\rightarrow +\infty}\left(\frac{\mu_{n+1}}{\mu_n}\right)<+\infty.$ 
Since the inclusion $\mathcal{U}^{(\mu)}(\mathbb{D},0)\subset \mathcal{U}(\mathbb{D},0)$ is obvious, it suffices to prove 
$\mathcal{U}(\mathbb{D},0)\subset \mathcal{U}^{(\mu)}(\mathbb{D},0)$. 
Let also $f$ be in $\mathcal{U}(\mathbb{D},0)$. Thus according to Lemma \ref{lemme_OG} 
for all compact subset $K\subset\mathbb{C}\setminus\mathbb{D}$ with connected complement 
and for all $h\in A(K),$ there exists two sequences of positive integers $(p_m),$ $(q_m)$ such that 
\begin{enumerate}
\item the Taylor series of $f$ at $0$ has Ostrowski-gaps $(p_m,q_m)$,
\item and $\displaystyle\sup_{z\in K}\vert S_{p_m}(f)(z)-h(z)\vert\rightarrow 0$, as $m\rightarrow +\infty$.
\end{enumerate}
Since we have both 
$$\displaystyle\limsup_{n\rightarrow +\infty}\left(\frac{\mu_{n+1}}{\mu_n}\right)<+\infty\quad\hbox{ and }\quad 
\displaystyle\frac{q_m}{p_m}\rightarrow +\infty,\hbox{ as }m\rightarrow +\infty,$$
there exists $N\in\mathbb{N},$ such that for all $m\geq N$ one can find $\mu_{j_m}$ with 
$p_m\leq \mu_{j_m}<q_m$. Hence we apply Lemma \ref{lemme_OG2} to obtain  
$$\sup_{z\in K}\vert S_{p_m}(f)(z)-S_{\mu_{j_m}}(f)(z)\vert\rightarrow 0,\hbox{ as }m\rightarrow +\infty.$$
From the triangle inequality we get
$$\sup_{z\in K}\vert S_{\mu_{j_m}}(f)(z)-h(z)\vert\rightarrow 0,\hbox{ as }m\rightarrow +\infty.$$
This implies $f\in\mathcal{U}^{(\mu)}(\mathbb{D},0)$. \\

$(\ref{1ass})\Rightarrow(\ref{2ass})$: to do this, we assume that 
$\displaystyle\limsup_{n\rightarrow +\infty}\left(\frac{\mu_{n+1}}{\mu_n}\right)=+\infty$ 
and it suffices to exhibit an universal series $f\in \mathcal{U}(\mathbb{D},0)$ such that $f\notin \mathcal{U}^{(\mu)}(\mathbb{D},0)$. 
By hypothesis there exists an increasing subsequence of positive integers $(n_j)$ 
such that $\displaystyle\frac{\mu_{n_j+1}}{\mu_{n_j}}\rightarrow +\infty$ as $j\rightarrow +\infty$. We set, for all $j\geq 1$,  
\begin{equation}\label{equa_suites_def}u_j= \mu_{n_j}+1,\quad w_j= \mu_{n_j+1}\quad\hbox{ and }v_j=\lfloor \sqrt{u_j w_j}\rfloor.
\end{equation}
Clearly there exists $N_0\in\mathbb{N}$ such that, for all 
$j\geq N_0$, $u_j<v_j<w_j$ and 
$$\frac{v_j}{u_j}=\frac{\lfloor \sqrt{u_j w_j}\rfloor}{u_j}\leq
\sqrt{\frac{\mu_{n_j+1}}{\mu_{n_j}}}\rightarrow +\infty\hbox{ as }
j\rightarrow +\infty$$
and
$$\frac{w_j}{1+v_j}=\frac{w_j}{1+\lfloor \sqrt{u_j w_j}\rfloor}\leq
\frac{\mu_{n_j+1}}{\sqrt{(1+\mu_{n_j})\mu_{n_j+1}}}\rightarrow +\infty\hbox{ as }
j\rightarrow +\infty.$$
Let $(f_q)$ be an enumeration of all the polynomials with coefficients in $\mathbb{Q}+i\mathbb{Q}$. Let 
$(K_m)$ be a sequence of compact sets with connected complement and $K_m\cap\mathbb{D}=\emptyset$ for every 
$m\in\mathbb{N}$ such that for every compact set $K\subset\mathbb{C}\setminus\mathbb{D}$ with connected complement 
there exists $n\in\mathbb{N}$ such that $K\subset K_n$ (see \cite[Lemma 2.1]{NestorF}). 
We consider an enumeration $(K_{m_s},f_{q_s})$, $s=1,2,\dots$, of all couples $(K_m,f_q),$ $m,q=1,2,\dots$. Let also $(r_l)$ be an increasing sequence 
of real numbers with $0<r_l<1$ and $r_l\rightarrow 1,$ as $l\rightarrow +\infty$. We fix $z_0\in\mathbb{C}\setminus\mathbb{D}$. For all 
$l\geq 1$, we set $\tilde{K}_l=K_l\cup\{z_0\}$ and note that $\tilde{K}_l$ is a compact set with 
connected complement and $\tilde{K}_l\subset\mathbb{C}\setminus\mathbb{D}$. First we deal with $\tilde{K}_{m_1},$ $f_{q_1}$ and $r_1$. 
By applying Theorem \ref{thmCT}, we find $j_1\in\mathbb{N}$ and polynomials 
$$P_1(z)=\displaystyle\sum_{k=u_{j_1}}^{v_{j_1}}c_{j_1,k}z^k,\quad \tilde{P}_1(z)=\displaystyle\sum_{k=v_{j_1}+1}^{w_{j_1}}c_{j_1,k}z^k$$ 
such that 
$$\sup_{z\in \tilde{K}_{m_1}}\vert P_1(z)-f_{q_1}(z)\vert <\frac{1}{2^2},\quad \sup_{\vert z\vert\leq r_1}\vert P_1(z)\vert\leq \frac{1}{2^2}$$
and
$$\sup_{z\in \tilde{K}_{m_1}}\vert \tilde{P}_1(z)+f_{q_1}(z)\vert< \frac{1}{2^2}, \quad \sup_{\vert z\vert\leq r_1}\vert \tilde{P}_1(z)\vert\leq \frac{1}{2^2}.$$
Observe that we have, by the triangle inequality,
$$\begin{array}{rcl}\displaystyle\sup_{z\in \tilde{K}_{m_1}}\vert {P}_1(z)+ \tilde{P}_1(z)\vert&\leq&
\displaystyle \sup_{z\in \tilde{K}_{m_1}}\vert {P}_1(z)-f_{q_1}(z)\vert + 
\sup_{z\in \tilde{K}_{m_1}}\vert \tilde{P}_1(z)+f_{q_1}(z)\vert\\& <&\displaystyle\frac{2}{2^2}.\end{array}$$
Further we find $j_2\in\mathbb{N}$ with $j_2>j_1$ and polynomials 
$$P_2(z)=\displaystyle\sum_{k=u_{j_2}}^{v_{j_2}}c_{j_2,k}z^k,\quad  \tilde{P}_2(z)=\displaystyle\sum_{k=v_{j_2}+1}^{w_{j_2}}c_{j_2,k}z^k$$ 
such that 
$$\sup_{z\in \tilde{K}_{m_2}}\vert P_1(z)+\tilde{P}_1(z)+P_2(z)-f_{q_2}(z)\vert <\frac{1}{3^2},\quad \sup_{\vert z\vert\leq r_2}\vert P_2(z)\vert\leq \frac{1}{3^2}$$
and
$$\sup_{z\in \tilde{K}_{m_2}}\vert \tilde{P}_2(z)+f_{q_2}(z)\vert< \frac{1}{2^2}, \quad \sup_{\vert z\vert\leq r_2}\vert \tilde{P}_1(z)\vert\leq \frac{1}{3^2}.$$
Observe that we have, by the triangle inequality,
$$\begin{array}{rcl}\displaystyle\sup_{z\in \tilde{K}_{m_2}}\left\vert \sum_{i=1}^2\left({P}_i(z)+ \tilde{P}_i(z)\right)\right\vert&\leq &\displaystyle
\sup_{z\in \tilde{K}_{m_2}}\vert P_1(z)+ \tilde{P}_1(z)+{P}_2(z)-f_{q_2}(z)\vert + 
\sup_{z\in \tilde{K}_{m_2}}\vert \tilde{P}_2(z)+f_{q_2}(z)\vert \\&<&\displaystyle\frac{2}{3^2}.\end{array}$$
We argue by induction. Suppose that for a natural number $s\geq 2$ we have already defined integers 
$$j_1<j_2<\dots<j_s$$
and polynomials $P_i,\tilde{P}_i$, $i=1,\dots,s$ such that 
\begin{equation}\label{equaf1}
P_i(z)=\sum_{k=u_{j_i}}^{v_{j_i}}c_{j_i,k}z^k,\quad \tilde{P}_i(z)=\sum_{k=v_{j_i}+1}^{w_{j_i}}c_{j_i,k}z^k
\end{equation}
with
\begin{equation}\label{equaf2}\sup_{z\in \tilde{K}_{m_s}}\left\vert \sum_{i=1}^{s-1}P_i(z) +P_{s}(z)-f_{q_s}(z)\right\vert <\frac{1}{(s+1)^2},\quad 
\sup_{\vert z\vert\leq r_s}\vert P_s(z)\vert\leq \frac{1}{(s+1)^2}
\end{equation}
\begin{equation}\label{equaf3}\sup_{z\in \tilde{K}_{m_s}}\vert \tilde{P}_s(z)+f_{q_s}(z)\vert< \frac{1}{(s+1)^2}, \quad \sup_{\vert z\vert\leq r_s}\vert \tilde{P}_s(z)\vert\leq \frac{1}{(s+1)^2}.
\end{equation}
and
\begin{equation}\label{equaf4}\displaystyle\sup_{z\in \tilde{K}_{m_s}}\left\vert \sum_{i=1}^s({P}_i(z)+ \tilde{P}_i(z))\right\vert<\frac{2}{(s+1)^2}.
\end{equation}
Using Theorem \ref{thmCT}, we find $j_{s+1}\in\mathbb{N}$ with $j_{s+1}>j_s$ and polynomials 
$$P_{s+1}(z)=\sum_{k=u_{j_{s+1}}}^{v_{j_{s+1}}}c_{j_{s+1},k}z^k,\quad \tilde{P}_{s+1}(z)=\sum_{k=v_{j_{s+1}}+1}^{w_{j_{s+1}}}c_{j_{s+1},k}z^k$$
such that 
$$\sup_{z\in \tilde{K}_{m_{s+1}}}\left\vert \sum_{i=1}^{s}P_i(z) +P_{s+1}(z)-f_{q_{s+1}}(z)\right\vert <\frac{1}{(s+2)^2},\quad 
\sup_{\vert z\vert\leq r_{s+1}}\vert P_{s+1}(z)\vert\leq \frac{1}{(s+2)^2}$$
and
$$\sup_{z\in \tilde{K}_{m_{s+1}}}\vert \tilde{P}_{s+1}(z)+f_{q_{s+1}}(z)\vert< \frac{1}{(s+2)^2}, \quad 
\sup_{\vert z\vert\leq r_{s+1}}\vert \tilde{P}_{s+1}(z)\vert\leq \frac{1}{(s+2)^2}.$$
Thus from the triangle inequality we get 
$$\begin{array}{rcl}\displaystyle\sup_{z\in \tilde{K}_{m_{s+1}}}\left\vert \sum_{i=1}^{s+1}({P}_i(z)+ \tilde{P}_i(z))\right\vert&\leq &\displaystyle
\sup_{z\in \tilde{K}_{m_{s+1}}}\left\vert \sum_{i=1}^{s}\left({P}_i(z)+\tilde{P}_i(z)\right)+{P}_{s+1}(z)-f_{q_{s+1}}(z)\right\vert\\&&\ 
\displaystyle + 
\sup_{z\in \tilde{K}_{m_{s+1}}}\vert \tilde{P}_{s+1}(z)+f_{q_{s+1}}(z)\vert \\&<&\displaystyle\frac{2}{(s+1)^2}.\end{array}$$
The induction is valid. Finally we set 
$$f(z)=\sum_{i\geq 1}\left(P_{i}(z)+\tilde{P}_i(z)\right).$$
Thanks to the second inequalities of $(\ref{equaf2})$ and $(\ref{equaf3})$ this series converges on all compact subsets of $\mathbb{D}$. So $f\in H(\mathbb{D})$. 
The first inequality of $(\ref{equaf2})$ ensures that $f\in\mathcal{U}(\mathbb{D},0)$. Indeed, let 
$K\subset\mathbb{C}\setminus\mathbb{D}$ be a compact set with connected 
complement and $h\in A(K)$. Set $\varepsilon>0$ and $s_0\in\mathbb{N}$ with $1/(s_0+1)^2<\varepsilon/2$. By hypothesis, one can find a positive integer $p>s_0$ 
such that $K\subset K_{m_p}$ and $\sup_{K}\vert h-f_{q_p}\vert<\varepsilon/2$. Thus from $(\ref{equaf2})$ we get 
$$\sup_{z\in K}\vert S_{v_{j_p}}(f)(z)-h(z)\vert\leq \sup_{z\in K_{m_p}}\vert S_{v_{j_p}}(f)(z)-f_{q_p}(z)\vert + \sup_{z\in K_{m_p}}\vert h(z)-f_{q_p}(z)\vert <
\frac{1}{(p+1)^2}+\frac{\varepsilon}{2}<\varepsilon.$$
Moreover observe that the property (\ref{equa_suites_def}) 
implies that
$$\{\mu_n;n\geq n_{j_1}+1\}\cap \left(\cup_{s\geq 1}\{u_{j_s},u_{j_s}+1,\dots,w_{j_s}-1\}\right)=\emptyset.$$
Therefore the equation (\ref{equaf4}) guarantees that, for all $n\in\mathbb{N},$
$$\left\vert S_{\mu_n}(f)(z_0)\right\vert\leq 2\sum_{s\geq 1}\frac{1}{(s+1)^2}=\frac{\pi^2}{3}-2.$$
From this last inequality we easily deduce that $f\notin\mathcal{U}^{(\mu)}(\mathbb{D},0)$. 
\end{proof}

\subsection{Universal Taylor series and center independence} Let us recall that the classes $\mathcal{U}(\mathbb{D},0)$ and $\mathcal{U}(\mathbb{D},\zeta)$ coincide 
for all $\zeta\in\mathbb{D}$ \cite{GLM,MNadv}. The proof is based on Lemma \ref{lemme_OG}, Lemma \ref{lemme_OG2} and Lemma \ref{lemme_npf} 
which asserts that if a Taylor series $f$ has Ostrowski-gaps $(p_m,q_m)$ then the difference 
$S_{p_m}(f,\zeta)(z)-S_{p_m}(f,0)(z)$ converges to zero (as $m\rightarrow +\infty$) uniformly on compact sets of 
$\mathbb{D}\times \mathbb{C}$ ($\zeta\in\mathbb{D}$, $z\in\mathbb{C}$). 
But if you can choose $q_m\in\mu$, there is no evidence that you can choose $p_m\in\mu$. 
Thus it is not clear that we have $\mathcal{U}^{(\mu)}(\mathbb{D},0)=\mathcal{U}^{(\mu)}(\mathbb{D},\zeta)$. 
Nevertheless Theorem \ref{main_thm} immediately leads to the 
following corollary.

\begin{corollary}\label{main_thm3} 
Let $\zeta\in\mathbb{D}$. Let $\mu=(\mu_n)$ be a strictly increasing sequence of positive integers with 
$\displaystyle\limsup_{n\rightarrow +\infty}\left(\frac{\mu_{n+1}}{\mu_n}\right)<+\infty.$ Then we have 
$\mathcal{U}^{(\mu)}(\mathbb{D},\zeta)=\mathcal{U}^{(\mu)}(\mathbb{D},0)$.
\end{corollary}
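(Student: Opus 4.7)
The plan is essentially a one-line deduction combining Theorem \ref{main_thm} with the classical center-independence of the ordinary class $\mathcal{U}(\mathbb{D},\cdot)$. First, I would apply Theorem \ref{main_thm} at the center $0$: the hypothesis $\limsup_{n} \mu_{n+1}/\mu_n<+\infty$ is exactly assertion (\ref{2ass1}) of that theorem, so the equivalence yields $\mathcal{U}^{(\mu)}(\mathbb{D},0)=\mathcal{U}(\mathbb{D},0)$. Second, I would apply the same theorem at the center $\zeta$; as the author remarks at the beginning of Section 2.2, the proof goes through verbatim for any $\zeta\in\mathbb{D}$, so we obtain $\mathcal{U}^{(\mu)}(\mathbb{D},\zeta)=\mathcal{U}(\mathbb{D},\zeta)$.

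Third, I would invoke the classical equality $\mathcal{U}(\mathbb{D},0)=\mathcal{U}(\mathbb{D},\zeta)$ from \cite{GLM,MNadv}, recalled at the opening of this subsection. Concatenating the three identities gives
$$\mathcal{U}^{(\mu)}(\mathbb{D},\zeta)=\mathcal{U}(\mathbb{D},\zeta)=\mathcal{U}(\mathbb{D},0)=\mathcal{U}^{(\mu)}(\mathbb{D},0),$$
which is the corollary.

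There is no real obstacle to confront here; this is a packaging statement. All the nontrivial content sits in Theorem \ref{main_thm} itself (the Ostrowski-gap sandwich argument which, under a bounded ratio $\mu_{n+1}/\mu_n$, allows one to insert some $S_{\mu_{j_m}}$ between $p_m$ and $q_m$ and then apply Lemma \ref{lemme_OG2}) and in the cited center-independence theorem for $\mathcal{U}(\mathbb{D},\cdot)$, which itself rests on Lemmas \ref{lemme_OG}, \ref{lemme_OG2} and \ref{lemme_npf}. The contrast with Theorem \ref{main_thm_inde}, where the analogous equality fails as soon as $\limsup_{n} \mu_{n+1}/\mu_n=+\infty$, confirms that the boundedness assumption cannot be dropped from the corollary.
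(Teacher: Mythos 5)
Your argument is correct and coincides exactly with the paper's own proof: apply Theorem \ref{main_thm} at both centers to reduce $\mathcal{U}^{(\mu)}(\mathbb{D},\cdot)$ to $\mathcal{U}(\mathbb{D},\cdot)$, then invoke the classical center-independence $\mathcal{U}(\mathbb{D},\zeta)=\mathcal{U}(\mathbb{D},0)$ from \cite{GLM,MNadv}. Nothing to add.
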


\begin{proof} We know that, for all $\zeta\in\mathbb{D}$, $\mathcal{U}(\mathbb{D},\zeta)=\mathcal{U}(\mathbb{D},0)$ \cite{GLM, MNadv}. 
Assume that $\displaystyle\limsup_{n\rightarrow +\infty}\left(\frac{\mu_{n+1}}{\mu_n}\right)<+\infty.$ Hence Theorem 
\ref{main_thm} ensures that $\mathcal{U}^{(\mu)}(\mathbb{D},\zeta)=\mathcal{U}(\mathbb{D},\zeta)$ and $\mathcal{U}^{(\mu)}(\mathbb{D},0)=\mathcal{U}(\mathbb{D},0)$. 
We get $\mathcal{U}^{(\mu)}(\mathbb{D},\zeta)=\mathcal{U}^{(\mu)}(\mathbb{D},0)$.
\end{proof}

Corollary \ref{main_thm3} covers all the known examples of sequences $\mu$ such that $\mathcal{U}^{(\mu)}(\mathbb{D},\zeta)=\mathcal{U}^{(\mu)}(\mathbb{D},0)$ 
\cite{Vlachou1, Vlachou2}. Moreover Corollary \ref{main_thm3} is optimal in the following sense. 

\begin{theorem}\label{rec_thm} 
Let $\zeta\in\mathbb{D}$, $\zeta\ne 0$. Let $\mu=(\mu_n)$ be a strictly increasing sequence of positive integers with 
$\displaystyle\limsup_{n\rightarrow +\infty}\left(\frac{\mu_{n+1}}{\mu_n}\right)=+\infty.$ Then we have 
$\mathcal{U}^{(\mu)}(\mathbb{D},\zeta)\ne\mathcal{U}^{(\mu)}(\mathbb{D},0)$.
\end{theorem}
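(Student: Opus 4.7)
My plan is to adapt the inductive construction of Theorem \ref{main_thm2} so as to produce a function $f \in \mathcal{U}^{(\mu)}(\mathbb{D},0) \setminus \mathcal{U}^{(\mu)}(\mathbb{D},\zeta)$. Under the hypothesis $\limsup \mu_{n+1}/\mu_n = +\infty$, I extract $(n_j)$ with $\mu_{n_j+1}/\mu_{n_j}\to +\infty$ and define $u_j,v_j,w_j$ as in \eqref{equa_suites_def}; crucially $w_j=\mu_{n_j+1}\in\mu$. Fix the enumeration $(\tilde K_{m_s}, f_{q_s})$ and $r_l\nearrow 1$ as in Theorem \ref{main_thm2}. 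Because $\zeta\ne 0$, I moreover fix once and for all a point $z_0 \in \mathbb{C}\setminus\mathbb{D}$ with $|z_0|>|z_0-\zeta|$ (for instance $z_0$ on the ray from $\zeta$ through $0$, outside the unit circle); this is the point at which the eventual blocking of the partial sums at $\zeta$ will take place, and here the hypothesis $\zeta\ne 0$ is used.

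At each step $s$, using Theorem \ref{thmCT}, I produce polynomials $P_s,\tilde P_s$ with respective degrees in $[u_{j_s},v_{j_s}]$ and $[v_{j_s}+1,w_{j_s}]$ satisfying: (a) an approximation at $0$, namely $\sum_{k\le s}(P_k+\tilde P_k)\approx f_{q_s}$ on $\tilde K_{m_s}$ to error $1/(s+1)^2$ (exactly as in Theorem \ref{main_thm2}); combined with $w_{j_s}\in\mu$, this will give $f\in\mathcal{U}^{(\mu)}(\mathbb D,0)$; (b) a cancellation at $\zeta$: using the freedom in the coefficients of $\tilde P_s$, I arrange that the truncated $\zeta$-expansion $T_{w_{j_{s-1}}}(\tilde P_s,\zeta)(z_0)$ approximately cancels the $z_0$-value $f_{q_{s-1}}(z_0)$ of the preceding approximation, so that $S_{w_{j_{s-1}}}(f,\zeta)(z_0)$ ends up of order $1/s^2$; and (c) $\sup_{|z|\le r_s}|P_s|,\ |\tilde P_s|\le 1/(s+1)^2$, ensuring $f\in H(\mathbb D)$. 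To realize (b) without spoiling (a) and (c), I start from an initial $\tilde P_s^{(0)}$ given by Theorem \ref{thmCT} and add a small correction $c_s\Pi_s$, where $\Pi_s$ is a second polynomial of degrees in $[v_{j_s}+1,w_{j_s}]$ produced by Theorem \ref{thmCT}, this time approximating on $\tilde K_{m_s}$ the target function that is $0$ on $K_{m_s}$ and takes the value $1$ at $z_0$ (which is admissible since, after renumbering, $z_0\notin K_m$ for every $m$, so this target lies in $A(\tilde K_{m_s})$ and extends holomorphically to a disjoint neighbourhood).

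Setting $f:=\sum_s(P_s+\tilde P_s)$, (a) and (c) yield $f\in H(\mathbb D)\cap\mathcal{U}^{(\mu)}(\mathbb D,0)$ as in Theorem \ref{main_thm2}. The non-membership $f\notin\mathcal{U}^{(\mu)}(\mathbb D,\zeta)$ is then shown by verifying $\sup_n|S_{\mu_n}(f,\zeta)(z_0)|<\infty$: at $\mu_n=w_{j_{s-1}}$ this is exactly (b), plus control of the tail $\sum_{k\ge s}T_{w_{j_{s-1}}}(P_k+\tilde P_k,\zeta)(z_0)$, while for the remaining $\mu_n$ one uses the trivial Ostrowski gaps of $f$ (coefficients vanishing on $(w_{j_{s-1}},u_{j_s}-1)$) together with Lemma \ref{lemme_npf} to propagate the boundedness at $z_0$ from $w_{j_{s-1}}$ to all intermediate $\mu_n$. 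The main obstacle is the combined verification of (b) together with the tail estimate: one must show quantitatively that the required correcting constant $c_s$ stays controlled of order $|f_{q_{s-1}}(z_0)|$, which rests crucially on the inequality $|z_0-\zeta|/|z_0|<1$ and on Bernstein-type norm estimates for $\Pi_s$. These estimates are the direct analogues of the bounds $A_{1,m},A_{2,m}$ appearing in the proof of Lemma \ref{lemme_npf}, which is the precise sense in which the argument combines the constructive method of Theorem \ref{main_thm2} with the ideas of the new proof of Luh's theorem.
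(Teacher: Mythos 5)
Your overall plan (use Theorem \ref{thmCT} to build $f\in\mathcal{U}^{(\mu)}(\mathbb{D},0)$ with degree blocks in the $\mu$-gaps, then defeat universality at center $\zeta$ by making $S_{\mu_n}(f,\zeta)(z_0)$ bounded at a well-chosen $z_0$) is the same strategy as the paper, and your choice of $z_0$ with $|z_0-\zeta|<|z_0|$ is the right idea (though note that a point on the ray \emph{from $\zeta$ through $0$} past the unit circle has $|z_0-\zeta|=|z_0|+|\zeta|>|z_0|$; you want the ray from $0$ through $\zeta$, or, as in the paper, the closest boundary point to $\zeta$, giving $|z_0|=1>1-|\zeta|=|z_0-\zeta|$). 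However, the mechanism you propose for step (b) does not work, and this is the heart of the proof.

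You want the correction $c_s\Pi_s$, supported in degrees $[v_{j_s}+1,w_{j_s}]$, to cancel the ``finite part'' $f_{q_{s-1}}(z_0)$ of $S_{w_{j_{s-1}}}(f,\zeta)(z_0)$. But the contribution of \emph{any} polynomial supported in degrees $\geq 1+v_{j_s}\gg w_{j_{s-1}}$ to the $\zeta$-Taylor section of order $w_{j_{s-1}}$ at $z_0$ is exactly an $A_{2,m}$-type term from Lemma \ref{lemme_npf}: because $w_{j_{s-1}}/(1+v_{j_s})\to 0$, this truncated $\zeta$-expansion $T_{w_{j_{s-1}}}(\Pi_s,\zeta)(z_0)$ is \emph{exponentially small} in $v_{j_s}$ (you even cite those estimates, but they work against you here, not for you). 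So you have no lower bound on $|T_{w_{j_{s-1}}}(\Pi_s,\zeta)(z_0)|$ at all, and even the natural upper bound would force $c_s$ to be exponentially large, with no control on its effect. The inequality $|z_0-\zeta|/|z_0|<1$ does nothing to repair this, because the smallness comes from the Ostrowski gap between $w_{j_{s-1}}$ and $v_{j_s}$, not from the geometry of $z_0$. Separately, the auxiliary target (``$0$ on $K_{m_s}$, $1$ at $z_0$'') does not lie in $A(\tilde K_{m_s})$ whenever $z_0\in K_{m_s}$, which happens for infinitely many indices since the sets $K_m$ exhaust $\mathbb{C}\setminus\mathbb{D}$; no renumbering avoids this.

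The paper's construction resolves precisely this difficulty: instead of correcting in the far-away degree range $[v_{j_s}+1,w_{j_s}]$, it adds a single monomial $R_l(z)=a_{1+w_{j_l}}z^{1+w_{j_l}}$ whose degree is \emph{exactly one above} the truncation order $w_{j_l}$ at which cancellation is needed. For such a monomial, $T_{w_{j_l}}(R_l,\zeta)(z_0)=a_{1+w_{j_l}}\bigl(z_0^{1+w_{j_l}}-(z_0-\zeta)^{1+w_{j_l}}\bigr)$ is computed explicitly, and the denominator is bounded below by $1/2$ (for large $l$) thanks to $|z_0|=1>|z_0-\zeta|$, which is where $\zeta\ne 0$ is used. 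This gives the clean bound $|a_{1+w_{j_l}}|\leq 2(1+w_{j_l})$, i.e., only polynomial growth, compatible with convergence in $\mathbb{D}$, and the finite part of $S_{w_{j_l}}(f,\zeta)(z_0)$ vanishes \emph{exactly}. The remaining tail (coefficients from $1+v_{j_{l+1}}$ onward) is controlled by the $A_{2,m}$-type estimates, i.e., by your observation, but now used in the direction in which it actually holds. To fix your proof you would need to replace the correcting polynomial $\Pi_s$ by such a boundary monomial; as written, the step (b) cannot be carried out.
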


\begin{proof} Let $\zeta\in\mathbb{D}$, $\zeta\ne 0$. 
We are going to build an universal series $f\in \mathcal{U}^{(\mu)}(\mathbb{D},0)$ such that 
$f\notin \mathcal{U}^{(\mu)}(\mathbb{D},\zeta)$. Let us consider 
$z_0$ with $\vert z_0\vert=1$ such that $\vert z_0-\zeta\vert=d(\zeta,\partial \mathbb{D}),$ the distance between $\zeta$ and the unit circle. By hypothesis there exists an increasing sequence of integers $(n_j)$ such that 
$$\frac{\mu_{n_j+1}}{\mu_{n_j}}\rightarrow +\infty,\hbox{ as }j\rightarrow +\infty.$$ 
We set, for all $j\geq 1$,  
\begin{equation}\label{20_equa_suites_def}u_j= \mu_{n_j}+1,\quad w_j= \mu_{n_j+1}\quad\hbox{ and }
v_j=\lfloor \sqrt{u_j w_j}\rfloor.\end{equation}
As in the proof of Theorem \ref{main_thm2}, we have for all $j$ large enough $u_j<v_j<w_j$ and 
\begin{equation}\label{21_equa_suites_def}\frac{v_j}{u_j}\rightarrow +\infty\hbox{ and } \frac{w_j}{1+v_j}\rightarrow 
+\infty\hbox{ as }
j\rightarrow +\infty.\end{equation}
Let us also consider an enumeration $(K_{m_s},f_{q_s})$, $s=1,2,\dots$, of all couples 
$(K_m,f_q),$ $m,q=1,2,\dots$, where $(f_q)$ is an enumeration of all the polynomials with coefficients 
in $\mathbb{Q}+i\mathbb{Q}$ and
$(K_m)$ is a sequence of compact sets with connected complement and $K_m\cap\mathbb{D}=\emptyset$ 
such that for every compact set $K\subset\mathbb{C}\setminus\mathbb{D}$ with connected complement 
there exists $n\in\mathbb{N}$ such that $K\subset K_n$. Let also $(r_l)$ be an increasing sequence 
of real numbers with $0<r_l<1$ and $r_l\rightarrow 1,$ as $l\rightarrow +\infty$. 
For all $l\geq 1$, we set $\tilde{K}_l=K_l\cup\{z_0\}$. Observe that $\tilde{K}_l$ is a compact set with 
connected complement and $\tilde{K}_l\subset\mathbb{C}\setminus\mathbb{D}$. First we deal with 
$\tilde{K}_{m_1},$ $f_{q_1}$ and $r_1$. By applying Theorem \ref{thmCT}, we find $j_1\in\mathbb{N}$ and polynomial 
$$P_1(z)=\displaystyle\sum_{k=1+v_{j_1}}^{w_{j_1}}a_{k}z^k$$ such that, for all $j\geq j_1,$
\begin{equation}\label{cons2_equ1}
2\vert z_0^j-(z_0-\zeta)^{j}\vert =2 (1-(1-\vert \zeta\vert)^j)>1
\end{equation}
and 
$$\vert f_{q_1}(z_0)\vert\leq w_{j_1},$$
$$\sup_{\tilde{K}_{m_1}}\vert  P_1(z)-f_{q_1}(z)\vert<\frac{1}{2}\hbox{ and }
\sup_{\vert z\vert\leq r_1}\vert P_1(z)\vert<\frac{1}{2}.$$
By the triangle inequality we have  
\begin{equation}\label{cons2_equ12}\vert P_1(z_0)\vert\leq \vert P_1(z_0)-f_{q_1}(z_0)\vert+\vert f_{q_1}(z_0)\vert\leq 
\frac{1}{2}+w_{j_1}\leq 1+w_{j_1}.
\end{equation}
We define 
$$a_{1+w_{j_1}}=-\frac{P_1(z_0)}{z_0^{1+w_{j_1}}-(z_0-\zeta)^{1+w_{j_1}}}.$$
Thanks to (\ref{cons2_equ1}) and (\ref{cons2_equ12}), we have 
$$\vert a_{1+w_{j_1}}\vert \leq 2 (1+w_{j_1}).$$
Then we set 
$$R_1(z)= a_{1+w_{j_1}}z^{1+w_{j_1}}.$$
By induction we construct an increasing sequence of integers $(j_l)$, two sequences of polynomials 
$(P_l)$ and $(R_l)$ such that, for all $l\geq 2$,
\begin{equation}\label{cons2_Luh}\frac{\mu_{n_{j_l}}}{1+\mu_{n_{j_{l-1}}}}\rightarrow +\infty,\hbox{ as }l\rightarrow +\infty
\end{equation}
$$\vert f_{q_l}(z_0)\vert\leq w_{j_l},$$
\begin{equation}\label{cons2_equ20}P_l(z)=\displaystyle\sum_{k=1+v_{j_l}}^{w_{j_l}}a_{k}z^k,\quad 
R_l(z)=a_{1+w_{j_l}}z^{1+w_{j_l}}
\end{equation}
with
\begin{equation}\label{cons2_equ21}\sup_{\tilde{K}_{m_l}}\left\vert  P_l(z)+\sum_{i=1}^{l-1}(P_i(z)+R_i(z))-f_{q_l}(z)\right\vert<\frac{1}{2^l}\hbox{ and }
\sup_{\vert z\vert\leq r_l}\vert P_l(z)\vert<\frac{1}{2^l},
\end{equation}
and
\begin{equation}\label{cons2_equ22}a_{1+w_{j_l}}=-\frac{P_l(z_0)+\displaystyle\sum_{i=1}^{l-1}(P_i(z_0)+R_i(z_0))}
{z_0^{1+w_{j_l}}-(z_0-\zeta)^{1+w_{j_l}}}.
\end{equation}
By the triangle inequality we have  
$$\vert P_l(z_0)\vert\leq 
\frac{1}{2^l}+w_{j_l}+\leq 1+w_{j_l}$$
and, taking into account (\ref{cons2_equ1}), we deduce
\begin{equation}\label{cons2_equ23}
\vert a_{1+w_{j_l}}\vert \leq 2 (1+w_{j_l}).
\end{equation}
We also define, for all $l\geq 2$, 
\begin{equation}\label{cons2_equ24_bis}
a_k=0\hbox{ for }k=2+w_{j_{l-1}},\dots,v_{j_l}.
\end{equation}
By construction we get, for all $l\geq 1$, using (\ref{cons2_equ22}), 
\begin{equation}\label{cons2_equ24}
\begin{array}{rcl}\displaystyle
\sum_{j=0}^{w_{j_l}}\left(\sum_{k=j}^{1+w_{j_l}}a_k{ k\choose j}\zeta^{k-j}\right) (z_0-\zeta)^j&=&\displaystyle
\sum_{j=0}^{w_{j_l}}\left(\sum_{k=j}^{w_{j_l}}a_k{ k\choose j}\zeta^{k-j}\right) (z_0-\zeta)^j \\&&\displaystyle
+\ 
a_{1+w_{j_l}}\sum_{j=0}^{w_{j_l}}{1+w_{j_l}\choose j}\zeta^{1+w_{j_l}-j} (z_0-\zeta)^j\\&
=&\displaystyle\sum_{i=1}^{l-1} (P_i(z_0)+R_i(z_0)) +P_l(z_0)\\&& +\ a_{1+w_{j_l}} 
\left(z_0^{1+w_{j_l}}-(z_0-\zeta)^{1+w_{j_l}}\right)\\&=&0.\end{array}
\end{equation}
Now we consider 
$$f(z)=\sum_{i=1}^{+\infty}(P_i(z)+R_i(z)):=\sum_{k\geq 0} a_k z^k.$$ 
Thanks to the estimates (\ref{cons2_equ21}) and (\ref{cons2_equ23}) the series $f$ belongs to $H(\mathbb{D})$. 
The equations (\ref{cons2_equ20}) and (\ref{cons2_equ21}) ensure that, for all $l\geq 1$,  
$$\sup_{z\in K_{m_l}}\vert S_{w_{j_l}}(f)(z)-f_{q_l}(z)\vert < \frac{1}{2^l}.$$
Since, for all $l\geq 1$, $w_{j_l}=\mu_{1+n_{j_l}}$, we deduce that 
$f$ belongs to $\mathcal{U}^{(\mu)}(\mathbb{D},0)$. To finish the 
proof, we are going to prove that $f\notin \mathcal{U}^{(\mu)}(\mathbb{D},\zeta)$. Notice that we have 
$$S_{m}(f,\zeta)(z_0)= \sum_{j=0}^{m}\left(\sum_{k=j}^{+\infty}a_k{ k\choose j}\zeta^{k-j}\right) (z_0-\zeta)^j.$$
First we deal with the case $m=w_{j_l}$. We have, thanks to the properties 
(\ref{cons2_equ24_bis}) and (\ref{cons2_equ24}),
\begin{equation}\label{cons2_equ30}S_{w_{j_l}}(f,\zeta)(z_0)=
\sum_{j=0}^{w_{j_l}}\left(\sum_{k=1+v_{j_{l+1}}}^{+\infty}a_k{ k\choose j}\zeta^{k-j}\right) (z_0-\zeta)^j.
\end{equation}
Using the equation (\ref{cons2_Luh}), we argue as in the estimates (\ref{LN_nouv_equ2}), (\ref{LN_nouv_equ3}) and 
(\ref{LN_equ3}) of the proof of Lemma \ref{lemme_npf} to obtain 
\begin{equation}\label{cons2_nonuniv}
S_{w_{j_l}}(f,\zeta)(z_0)\rightarrow 0,\hbox{ as }l\rightarrow +\infty.
\end{equation}

Now, for all 
$l\geq 1$  and  for all positive integer $m\in [1+w_{j_l},u_{j_{l+1}}]$, we can write, thanks to (\ref{cons2_equ24_bis}) 
and (\ref{cons2_equ24}) again,
$$\begin{array}{rcl}S_{m}(f,\zeta)(z_0)&= &\displaystyle
\sum_{j=0}^{w_{j_l}}\left(\sum_{k=1+v_{j_{l+1}}}^{+\infty}a_k{ k\choose j}\zeta^{k-j}\right) (z_0-\zeta)^j\\&&
+\ \displaystyle\sum_{j=1+w_{j_l}}^m\left(\sum_{k=j}^{+\infty}a_k{ k\choose j}\zeta^{k-j}\right) (z_0-\zeta)^j\\&
=&\displaystyle\sum_{j=0}^{w_{{j_l}}}\left(\sum_{k=1+v_{j_{l+1}}}^{+\infty}a_k{ k\choose j}\zeta^{k-j}\right) (z_0-\zeta)^j
\\&&\displaystyle+\ a_{1+w_{j_l}}(z_0-\zeta)^{1+w_{j_l}}+
\sum_{j=1+w_{j_l}}^m\left(\sum_{k=1+v_{j_{l+1}}}^{+\infty}a_k{ k\choose j}\zeta^{k-j}\right) (z_0-\zeta)^j\\
&:=&T_{1,l}(z_0)+\ a_{1+\mu_{n_{j_l}+1}}(z_0-\zeta)^{1+\mu_{{j_l}+1}}+ T_{2,l}(z_0).
\end{array}
$$
By (\ref{cons2_equ23}) we get $\vert a_{1+w_{j_l}}(z_0-\zeta)^{1+w_{j_l}}\vert\rightarrow 0,$ as $l$ tends to infinity. 
Again inspired by the estimates (\ref{LN_nouv_equ2}), (\ref{LN_nouv_equ3}) and 
(\ref{LN_equ3}) of the proof of Lemma \ref{lemme_npf} we get, for $i=1,2,$ $T_{i,l}(z_0)\rightarrow 0,$ as $l$ tends to infinity 
(let us recall that $m\in [1+\mu_{{j_l}+1},u_{j_{l+1}}]$ and $(1+v_{j_{l+1}})/u_{j_{l+1}}\rightarrow +\infty$). 
In summary we have, for all positive integer $m\in [w_{{j_l}},u_{j_{l+1}}]$, 
\begin{equation}\label{cons2_equ26}
S_{m}(f,\zeta)(z_0)\rightarrow 0,\hbox{ as }l\rightarrow +\infty. 
\end{equation}
By construction, we have 
\begin{equation}\label{cons2_equ25}\{\mu_n;n\in\mathbb{N}\}\bigcap
\left(\bigcup_{l\geq 1}([1+u_{j_{l+1}},w_{j_{l+1}}-1]\cap\mathbb{N})\right)=\emptyset.
\end{equation}
From (\ref{cons2_equ26}) and (\ref{cons2_equ25}), we get 
$$S_{\mu_n}(f,\zeta)(z_0)\rightarrow 0,\hbox{ as }n\rightarrow +\infty.$$
It follows that $f\notin \mathcal{U}^{(\mu)}(\mathbb{D},\zeta)$. This finishes the proof.
\end{proof}

\begin{example} {\rm Let $\zeta\in\mathbb{D}$, $\zeta\ne 0$. We can apply Theorem \ref{rec_thm} with the sequence $(\mu_n)=(n!)$. Therefore we obtain 
$\mathcal{U}^{((n!))}(\mathbb{D},0)\ne\mathcal{U}^{((n!))}(\mathbb{D},\zeta).$}
\end{example}

\begin{remarks}{\rm \begin{enumerate}
\item The combination of Corollary \ref{main_thm3} with Theorem \ref{rec_thm} gives Theorem \ref{main_thm_inde}.
\item Notice also that Theorem \ref{main_thm}, Corollary \ref{main_thm3} and Theorem \ref{rec_thm} 
remain valid for the classes of universal Taylor series 
$\mathcal{U}(\Omega,\zeta)$ and $\mathcal{U}^{(\mu)}(\Omega,\zeta)$ where you replace 
the unit disc $\mathbb{D}$ by a simply connected domain $\Omega,$ with $\Omega\subset\mathbb{C}$ and 
$\Omega\ne\mathbb{C}$, and $\zeta\in \Omega$, provided that the universal Taylor series 
possess Ostrowski-gaps (see \cite{MNadv, MVY}). To see this, it suffices to note that 
Lemma \ref{lemme_OG}, Lemma \ref{lemme_OG2}, Lemma \ref{lemme_npf} and Theorem \ref{thmCT} 
remain valid in this context. Thus the proofs work along the same lines.
\end{enumerate}
} 
\end{remarks}

\section{The real case} As far as we know the first example of universal series was introduced by Fekete \cite{Pal} 
which showed that there exists a real formal power series $\sum_{n\geq 1}a_n x^n$ satisfying the following universal property: for 
every continuous function $g$ on $[-1,1]$ with $g(0)=0$ there exists an increasing sequence $(\lambda_n)$ of positive integers such that 
$$\sup_{x\in [-1,1]}\left\vert \sum_{k=1}^{\lambda_n}a_kx^k-g(x)\right\vert\rightarrow 0,\hbox{ as }n\rightarrow +\infty.$$ 
Further combining this result with Borel's theorem we obtain $C^{\infty}$ functions vanishing at $0$ whose partial sums of its Taylor series with center $0$ 
approximate every continuous function vanishing at $0$ locally uniformly in $\mathbb{R}$ \cite{GE}. We denote by $C_0^{\infty}(\mathbb{R})$ the space 
of infinitely differentiable function on $\mathbb{R}$ vanishing at $0$.

\begin{definition} {\rm Let $\mu=(\mu_n)$ be an increasing sequence of positive integers with 
$\mu_n\rightarrow +\infty$ as $n$ tends to infinity. A function $f\in C_0^{\infty}(\mathbb{R})$ belongs to the class $\mathcal{U}^{(\mu)}$ of 
universal functions with respect to $\mu$ 
if for every compact set $K\subset\mathbb{R}$ and every continuous functions $h:\mathbb{R}\rightarrow\mathbb{R}$ with $h(0)=0$, there exists 
an increasing sequence $(\lambda_n)$ of positive integers such that
$$\sup_{x\in K}\left\vert S_{\lambda_n}(f)(x)-h(x)\right\vert\rightarrow 0,\hbox{ as }n\rightarrow +\infty.$$ 
For $\mu=\mathbb{N}$, we will denote $\mathcal{U}^{(\mu)}$ by $\mathcal{U}$.}
\end{definition}

In this context, the analogue of Theorem \ref{main_thm} states as follows. 

\begin{theorem}\label{main_thm_real} 
Let $\mu=(\mu_n)$ be a strictly increasing sequence of positive integers. The following assertions 
are equivalent:
\begin{enumerate} 
\item\label{1ass1} $\mathcal{U}=\mathcal{U}^{(\mu)}$
\item\label{2ass1} $\displaystyle\limsup_{n\rightarrow +\infty}\left(\frac{\mu_{n+1}}{\mu_n}\right)<+\infty.$
\end{enumerate}
\end{theorem}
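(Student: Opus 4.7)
The proof follows the blueprint of Theorem \ref{main_thm2}, with two adaptations for the real $C^\infty$ setting. First, the conclusion $f \in H(\mathbb{D})$ of the complex construction is replaced by Borel's theorem: any prescribed sequence of Taylor coefficients at $0$ is realized by some $f \in C^\infty(\mathbb{R})$, and with $a_0 = 0$ one has $f \in C_0^\infty(\mathbb{R})$. In particular, no convergence of $\sum_l (P_l + \tilde P_l)$ near $0$ is required. Second, one needs a real counterpart of Theorem \ref{thmCT}: for $(\sigma_n), (\tau_n)$ strictly increasing with $\tau_n/\sigma_n \to +\infty$, any compact $K \subset \mathbb{R}$, and any continuous $h$ on $K$ with $h(0) = 0$ when $0 \in K$, there exist polynomials $P_n(x) = \sum_{k = \sigma_n}^{\tau_n} c_{n,k} x^k$ with $\sup_K |h - P_n| \to 0$. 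One derives this by writing $P_n(x) = x^{\sigma_n} Q_n(x)$, approximating $h(x)/x^{\sigma_n}$ by a polynomial $Q_n$ of degree $\tau_n - \sigma_n$ on the part of $K$ bounded away from $0$ via Weierstrass, and using that on the part near $0$ both $h$ (by $h(0) = 0$) and $x^{\sigma_n} Q_n$ are automatically small.

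For $(\ref{2ass1}) \Rightarrow (\ref{1ass1})$: I would establish the real analogue of Lemma \ref{lemme_OG}, asserting that for every $f \in \mathcal{U}$, every compact $K \subset \mathbb{R}$ and every continuous $h$ with $h(0) = 0$, there exist $(p_m), (q_m)$ with $q_m/p_m \to +\infty$, $|a_j|^{1/j} \to 0$ for $j \in \bigcup_m (p_m, q_m]$, and $\sup_K |S_{p_m}(f) - h| \to 0$. After reducing to $K \subset \{|x| \geq 1\}$ by splitting, one extends the target by $0$ on a small interval $[-r, r]$ and applies universality, obtaining $\sup_{[-r, r]} |S_{p_m}(f)| \to 0$; Chebyshev-Markov coefficient estimates for real polynomials bounded on an interval then replace the Cauchy-type estimates used in the complex proof and yield the gap condition. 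The real analogue of Lemma \ref{lemme_OG2} is elementary: $\sup_{K'} |\sum_{k = p_m + 1}^{r_m} a_k x^k| \to 0$ on every compact $K' \subset \mathbb{R}$ for $r_m \in (p_m, q_m]$, since $|a_k|^{1/k} \to 0$ on the gap. Combining $q_m/p_m \to +\infty$ with $\limsup_n \mu_{n+1}/\mu_n < +\infty$ yields $\mu_{j_m} \in (p_m, q_m]$ for all large $m$, so $S_{\mu_{j_m}}(f) \to h$ uniformly on $K$ and $f \in \mathcal{U}^{(\mu)}$.

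For $(\ref{1ass1}) \Rightarrow (\ref{2ass1})$ (contrapositive): assume $\limsup_n \mu_{n+1}/\mu_n = +\infty$. Pick $(n_j)$ with $\mu_{n_j + 1}/\mu_{n_j} \to +\infty$, set $u_j, v_j, w_j$ as in \eqref{equa_suites_def}, enumerate $(K_{m_s}, f_{q_s})$ with $(K_m)$ exhausting the compact subsets of $\mathbb{R}$ and $(f_q)$ the polynomials with rational coefficients vanishing at $0$, fix $x_0 \in \mathbb{R}$ with $|x_0| > 1$, and set $\tilde K_l = K_l \cup \{x_0\}$. Using the real Theorem \ref{thmCT} analogue, inductively construct $(j_l)$ and polynomials $P_l(x) = \sum_{k = u_{j_l}}^{v_{j_l}} a_k x^k$, $\tilde P_l(x) = \sum_{k = v_{j_l} + 1}^{w_{j_l}} a_k x^k$ satisfying the analogues of \eqref{equaf2}--\eqref{equaf4} on $\tilde K_{m_l}$. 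Set $a_k = 0$ off $\bigcup_l [u_{j_l}, w_{j_l}]$ with $a_0 = 0$, and apply Borel's theorem to produce $f \in C_0^\infty(\mathbb{R})$ whose partial sums $S_n(f)$ match the construction. The analogue of \eqref{equaf2} at indices $v_{j_l}$ then gives $f \in \mathcal{U}$, while the disjointness $\{\mu_n\}_n \cap \bigcup_s (u_{j_s}, w_{j_s}) = \emptyset$ combined with the analogue of \eqref{equaf4} evaluated at $x_0 \in \tilde K_{m_l}$ forces $|S_{\mu_n}(f)(x_0)|$ to be uniformly bounded in $n$, so $f \notin \mathcal{U}^{(\mu)}$. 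The main obstacle is the real analogue of Lemma \ref{lemme_OG}: the Cauchy-type complex-analytic estimates producing Ostrowski-gaps must be replaced by real-variable substitutes exploiting simultaneous control of $S_{p_m}(f)$ on an interior interval $[-r, r]$ and on the exterior part of $K$ with $|x| > 1$.
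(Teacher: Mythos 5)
Your proof follows the same overall architecture as the paper's: use the real Ostrowski-gap lemma to get $(\ref{2ass1})\Rightarrow(\ref{1ass1})$, and reuse the constructive scheme of Theorem \ref{main_thm2} together with a real analogue of Theorem \ref{thmCT} and Borel's theorem for $(\ref{1ass1})\Rightarrow(\ref{2ass1})$. The one genuine difference is that the paper simply invokes two results from the literature --- the Ostrowski-gap property of functions in $\mathcal{U}$ (Lemma \ref{lemme_OG_real}, i.e.\ Proposition 4.4 of \cite{MouMun}) and the real Bernstein--Walsh type result with prescribed valuation and degree (Lemma \ref{thmCT_real}, i.e.\ Lemma 3.2 of \cite{Mou1}) --- whereas you attempt to derive both from scratch. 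You should cite them instead, because the derivations you sketch do not close.

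The sketch of the real analogue of Theorem \ref{thmCT} has a concrete gap. You write $P_n(x)=x^{\sigma_n}Q_n(x)$ and approximate $h(x)/x^{\sigma_n}$ by $Q_n$ via Weierstrass on the part of $K$ bounded away from $0$, then assert that near $0$ the term $x^{\sigma_n}Q_n(x)$ is automatically small. This does not follow: Weierstrass gives no control of $Q_n$ off the approximation set, and a polynomial of degree $\tau_n-\sigma_n$ bounded by $M$ on $K\cap\{|x|\ge\delta\}$ may be as large as roughly $M\,C^{\tau_n-\sigma_n}$ at the origin (Chebyshev growth). Since $\tau_n/\sigma_n\to\infty$, the factor $C^{\tau_n-\sigma_n}$ overwhelms $|x|^{\sigma_n}$ for fixed $x\ne0$, so $x^{\sigma_n}Q_n(x)$ need not be small near $0$. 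Lemma \ref{thmCT_real} is proved differently in \cite{Mou1}. Your sketch of the real Ostrowski-gap lemma is likewise loose (``extending the target by $0$ on $[-r,r]$'' and then appealing to Chebyshev--Markov estimates is only the germ of the argument in \cite[Prop.~4.4]{MouMun}). Neither gap damages the theorem since both ingredients are correct and cited in the paper, but as written your proofs of these lemmas would not withstand scrutiny. A final minor point: the enumeration $(f_q)$ should consist of rational polynomials \emph{vanishing at $0$}, as you correctly note; and the introduction of an auxiliary point $x_0$ with $|x_0|>1$ is unnecessary here, since the paper's final bound is simply taken over $[-1,1]\subset[-s,s]$.
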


To prove this result, we need the following results that are the $C^{\infty}$ versions of Lemma \ref{lemme_OG} and 
Theorem \ref{thmCT} respectively.

\begin{lemma}\label{lemme_OG_real}\cite[Proposition 4.4]{MouMun} 
Let $f\in\mathcal{U}$. Let $h:\mathbb{R}\rightarrow \mathbb{R}$ be a continuous 
function, with $h(0)=0.$ There exist two sequences of natural numbers $(\lambda_n),$ 
$(\mu_n)$ such that 
\begin{enumerate}
\item the Taylor series of $f$ around zero has Ostrowski-gaps 
$(\lambda_n,\mu_n),$
\item $S_{\lambda_n}(f)\rightarrow h,$ uniformly on each compact subset of 
$\mathbb{R}$ as $n\rightarrow +\infty.$
\end{enumerate}
\end{lemma}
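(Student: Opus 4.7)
The plan is to mimic the complex-case argument of Lemma~\ref{lemme_OG}, using the universality of $f$ to produce at every scale an approximation index $\mu_n$ for which the partial sum $S_{\mu_n}(f)$ is bounded on a huge interval $[-R_n,R_n]$; the Ostrowski gap $(\lambda_n,\mu_n)$ will then fall out of the coefficient decay forced by that boundedness, with $\lambda_n$ the quantitative threshold below which the monomial coefficients of $S_{\mu_n}(f)$ are controlled.

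First, I would fix a sequence $R_n\uparrow+\infty$ growing very fast (say $R_n=2^{n^{3}}$) and inductively build the integers $\mu_1<\mu_2<\cdots$ as follows. Given $\mu_{n-1}$, invoke the definition of $\mathcal{U}$ for the continuous target $h$ on the compact set $[-R_n,R_n]$ to select $\mu_n>\mu_{n-1}$, as large as we need for the estimates below, with
$$\sup_{|x|\leq R_n}\bigl|S_{\mu_n}(f)(x)-h(x)\bigr|<\tfrac1n.$$
Writing $M_n:=\sup_{|x|\leq R_n}|h(x)|+1$, this gives $\|S_{\mu_n}(f)\|_{[-R_n,R_n]}\leq M_n$.

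Second, apply the standard Chebyshev coefficient inequality to the polynomial $S_{\mu_n}(f)$ of degree $\mu_n$, bounded by $M_n$ on $[-R_n,R_n]$: rescaling by $y=x/R_n$ and expanding in the basis $(T_k)$ yields an absolute constant $C$ with
$$|a_j|\leq C\,2^{\mu_n}M_n\,R_n^{-j}\qquad(0\leq j\leq\mu_n).$$
Set $\lambda_n:=\lceil 4\mu_n/\log_2 R_n\rceil$ and, at the inductive step, choose $\mu_n$ large enough that both $\lambda_n>\mu_{n-1}$ and $CM_n\leq 2^{\lambda_n}$; universality permits $\mu_n$ to be taken arbitrarily large, so this costs nothing. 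Then for every $j\in(\lambda_n,\mu_n]$
$$|a_j|^{1/j}\leq (CM_n)^{1/j}\,2^{\mu_n/j}\,R_n^{-1}\leq 2\cdot 2^{\mu_n/\lambda_n}R_n^{-1}\leq 2R_n^{-1/2}\xrightarrow[n\to+\infty]{}0,$$
while by construction $\mu_n/\lambda_n\sim(\log_2 R_n)/4\to+\infty$, so $(\lambda_n,\mu_n)$ really is a pair of Ostrowski gaps in the sense of Definition~\ref{ogapcomplex}, proving~(1).

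For (2), the real analogue of Lemma~\ref{lemme_OG2} is immediate: on any compact $[-K,K]\subset\mathbb{R}$, once $n>2K$,
$$\sup_{|x|\leq K}|S_{\mu_n}(f)(x)-S_{\lambda_n}(f)(x)|\leq\sum_{j=\lambda_n+1}^{\mu_n}|a_j|K^j\leq\sum_{j>\lambda_n}(2K/\sqrt{R_n})^j\longrightarrow 0.$$
Combined with the fact that $S_{\mu_n}(f)\to h$ uniformly on every compact of $\mathbb{R}$ (because $R_n\to+\infty$ and the approximation error at scale $n$ is $1/n$), the triangle inequality yields $S_{\lambda_n}(f)\to h$ uniformly on compacts.

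The main obstacle is the balancing of constants in the coefficient bound: the Chebyshev factor $2^{\mu_n/j}$ only permits the gap ratio $\mu_n/\lambda_n$ to grow like $\log_2 R_n$, so the approximation radii $R_n$ have to be pushed to infinity (super-exponentially) in order to produce a true Ostrowski gap. Once this scaling is in place, the induction over $n$ is routine, since universality always allows $\mu_n$ to be chosen as large as any previously fixed threshold demands.
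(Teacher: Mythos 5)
The paper itself does not prove this lemma; it is imported verbatim from \cite[Proposition~4.4]{MouMun}, so I can only assess your argument on its own terms and against what the cited reference is likely to do. Your strategy---push the approximation interval $[-R_n,R_n]$ out super-exponentially fast, use universality to get $S_{\mu_n}(f)$ uniformly bounded there, and then extract coefficient decay on a range $(\lambda_n,\mu_n]$ with $\lambda_n\asymp\mu_n/\log R_n$ via a polynomial coefficient estimate---is sound, self-contained, and is indeed the natural real-variable substitute for the Cauchy/Bernstein machinery that drives the complex Lemma~\ref{lemme_OG}. It is almost certainly the same mechanism as in \cite{MouMun} (whose whole theme is ``polynomial inequalities and universal Taylor series''). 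The inductive bookkeeping (taking $\mu_n$ huge so that $\lambda_n>\mu_{n-1}$ and $CM_n\le 2^{\lambda_n}$), the verification that $\mu_n/\lambda_n\to+\infty$, and the transfer from $S_{\mu_n}$ to $S_{\lambda_n}$ via the tail estimate $\sum_{j>\lambda_n}|a_j|K^j$ are all correct.

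One concrete inaccuracy: the coefficient bound you invoke is stated with base $2$, i.e.\ $|a_j|\le C\,2^{\mu_n}M_n R_n^{-j}$, but the true extremal base for the coefficients of a degree-$d$ polynomial bounded on $[-1,1]$ is $1+\sqrt{2}$, not $2$. Writing $q(y)=\sum_k c_k T_k(y)$ with $|c_k|\le 2M$, the sum of the absolute values of the coefficients of $T_k$ equals $|T_k(i)|=\tfrac12\bigl((1+\sqrt{2})^k+(\sqrt{2}-1)^k\bigr)$, which gives $|b_j|\le C(1+\sqrt{2})^d M$; and this base cannot be improved to $2$ (already the coefficient $1280$ of $y^8$ in $T_{10}$ exceeds $2^{10}=1024$). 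This is harmless for your argument: simply set $\lambda_n:=\lceil 4\mu_n\log(1+\sqrt{2})/\log R_n\rceil$ instead, and all estimates go through unchanged, with $\mu_n/\lambda_n\asymp\log R_n\to+\infty$ as before. Two very minor further remarks: you should record that $\lambda_n<\mu_n$, which requires $\log R_n$ larger than the fixed constant $4\log(1+\sqrt 2)$ and thus forces you to start the induction at a suitable $n_0$; and the Ostrowski-gap definition in the paper is phrased for series of radius $r\in(0,\infty)$, whereas here the Taylor series of $f\in C_0^\infty(\mathbb{R})$ may have radius $0$ or $\infty$---the intended meaning is just conditions (1)--(2) of Definition~\ref{ogapcomplex}, and your construction does verify them.
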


\begin{lemma}\label{thmCT_real} \cite[Lemma 3.2]{Mou1} 
Let $(l_n)$ and $(m_n)$ be two strictly increasing sequences of positive integers such that 
$l_n\leq m_n$ and $\frac{m_n}{l_n}\rightarrow +\infty$ as $n\rightarrow +\infty.$ 
Let $A>0.$ For every continuous function $h:\mathbb{R}\rightarrow \mathbb{R},$ with $h(0)=0,$ there exists 
a sequence $(P_n)$ of real polynomials of the form $P_n(x)=\sum_{k=l_n}^{m_n}c_{n,k}x^k,$ such that
$$\sup_{x\in [-A,A]}\vert P_n(x)-h(x)\vert\rightarrow 0,\hbox{ as }n\rightarrow +\infty.$$
\end{lemma}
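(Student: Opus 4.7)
The plan is to first reduce to $A = 1$ via $x \mapsto x/A$, and then for each $\varepsilon > 0$ to construct $P_n$ of the required form with $\sup_{[-1,1]}|P_n - h| < \varepsilon$ for all sufficiently large $n$. Using the classical Weierstrass theorem together with subtraction of the constant term (permitted by $h(0) = 0$), I would first obtain a polynomial $R(x) = \sum_{k=1}^{d} a_k x^k$ with $R(0) = 0$ and $\sup_{[-1,1]}|R - h| < \varepsilon/3$, then pick $\delta \in (0, 1)$ so that $\sup_{|x| \leq \delta}|R(x)| < \varepsilon/3$.

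The main step would be to approximate this fixed polynomial $R$ by polynomials of the prescribed form. The compact set $K_\delta := [-1, -\delta] \cup [\delta, 1]$, viewed as a subset of $\mathbb{C}$, has connected complement, being a disjoint union of two real segments; the rescaling $z \mapsto z/\delta$ maps it to $K_\delta^* := [-1/\delta, -1] \cup [1, 1/\delta] \subset \{z : |z| \geq 1\}$, still with connected complement. Since the function $z \mapsto R(\delta z)$ is entire, I would apply Theorem \ref{thmCT} with $\sigma_n = l_n$, $\tau_n = m_n$ (the hypothesis $m_n/l_n \to +\infty$ yielding $\tau_n/\sigma_n \to +\infty$) to obtain polynomials $\tilde P_n(z) = \sum_{k=l_n}^{m_n} \tilde c_{n,k} z^k$ satisfying $\sup_{K_\delta^*}|\tilde P_n(z) - R(\delta z)| = O(\theta^{m_n})$ and $\sup_{|z| \leq r}|\tilde P_n(z)| = O(\theta^{m_n})$ for some $r \in (0, 1)$ and $\theta \in (0, 1)$. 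Setting $P_n(x) := \tilde P_n(x/\delta)$ produces a polynomial of the required degree structure, and $\sup_{K_\delta}|P_n - R| \to 0$ as $n \to \infty$. By the classical Cauchy-type estimate $\sup_{|z| \leq 1}|Q| \leq C r^{-\deg Q} \sup_{|z| \leq r}|Q|$ valid for any polynomial $Q$, one also obtains $\sup_{|x| \leq \delta}|P_n(x)| = O((\theta/r)^{m_n})$, which tends to $0$ provided the parameters are arranged so that $\theta < r$.

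Combining the pieces: on $K_\delta$, $|P_n - h| \leq |P_n - R| + |R - h| < 2\varepsilon/3$ for large $n$; on $\{|x| \leq \delta\}$, $|P_n - h| \leq |P_n| + |R| + |R - h| < \varepsilon$ for large $n$. Therefore $\sup_{[-1,1]}|P_n - h| < \varepsilon$ for all large $n$, establishing the claim. The main obstacle will be arranging $\theta < r$ in the Bernstein-Walsh step, balancing the exponential decay $\theta^{m_n}$ of the approximation against the growth factor $r^{-m_n}$ coming from the rescaling to the unit disk; this amounts to a careful choice of the parameters $r$ and the enclosing open set $U$ in Theorem \ref{thmCT}, exploiting that the constant governing $\theta$ stays bounded below as $r$ approaches $1$ (since $K_\delta^*$ remains strictly outside the closed unit disk). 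As a fall-back avoiding the complex-analytic detour, one can modify $h$ to vanish near $0$, divide by $x^{l_n}$, apply Weierstrass approximation of degree $\leq m_n - l_n$ to the continuous quotient, and multiply back, using the hypothesis $m_n/l_n \to +\infty$ to balance the cutoff scale against the polynomial degree.
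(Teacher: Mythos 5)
There is a genuine and, as far as I can see, fatal gap in your main line of argument, and your fallback has a separate problem.

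You correctly identify that the Cauchy estimate $\sup_{|z|\leq 1}|\tilde P_n| \leq C\, r^{-m_n}\sup_{|z|\leq r}|\tilde P_n|$ forces you to arrange $\theta < r$. But in your setup this is in fact \emph{impossible}, not merely delicate. Indeed, your $K_\delta^*=[-1/\delta,-1]\cup[1,1/\delta]$ touches the unit circle at $\pm 1$ (so your parenthetical remark that it ``remains strictly outside the closed unit disk'' is simply false), and the Bernstein--Walsh inequality for the disc gives, for any polynomial $Q$ of degree $\leq m_n$, $|Q(z)|\leq (|z|/r)^{m_n}\sup_{|w|\leq r}|Q|$. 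Applying this at $z=1\in K_\delta^*$ to $\tilde P_n$ yields $|\tilde P_n(1)|\leq C(\theta/r)^{m_n}$. Since $\tilde P_n(1)\to R(\delta)$, which is nonzero in general, one must have $\theta\geq r$; and if you push $K_\delta^*$ slightly off the circle to $[1+\epsilon,1/\delta]$, the same estimate at $z=1+\epsilon$, combined with the need to have $(1+\epsilon)\theta/r<1$ in your annulus bound, shows $\theta<r/(1+\epsilon)$ would force $R\equiv 0$ near $\delta$. So no choice of $r$ and $U$ in Theorem~\ref{thmCT} can give you $\theta<r$ unless $h$ (hence $R$) is identically zero, and the step ``provided the parameters are arranged so that $\theta<r$'' cannot be filled in. The control over the annulus $\{r\delta\leq|x|\leq\delta\}$ has to come from somewhere else entirely.

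Your fallback (modify $h$ to vanish near $0$, divide by $x^{l_n}$, Weierstrass-approximate the quotient in degree $\leq m_n-l_n$, multiply back) also does not close. Once $h$ has been replaced by $g$ vanishing on $[-\delta/2,\delta/2]$, the quotient $G_n:=g/x^{l_n}$ has sup-norm and modulus of continuity of size roughly $(2/\delta)^{l_n}$, so Jackson's theorem requires a degree that is \emph{exponential} in $l_n$ to obtain an $O(1)$-approximation, whereas the hypothesis $m_n/l_n\to+\infty$ only gives $m_n-l_n\gtrsim C\,l_n$ for every fixed $C$. Multiplying back by $x^{l_n}$ recovers the loss on $[-\delta/2,\delta/2]$ but not on $[\delta/2,1]$, where $|x|^{l_n}$ is no longer uniformly small; so the naive Jackson bound does not cancel against the weight and the error on $[\delta/2,1]$ stays large. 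One really needs a weighted or Müntz--Jackson type estimate here (the point being that approximating the single monomial $x^k$, $k$ fixed, by $\mathrm{span}\{x^{l_n},\dots,x^{m_n}\}$ has error controlled by a quantity like $(l_n/m_n)^{2k+1}$, which does tend to $0$ when $m_n/l_n\to\infty$), and this is precisely the nontrivial content of the lemma. As written, neither branch of your argument establishes it.
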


\vskip3mm

\noindent \textit{Sketch of the proof of Theorem \ref{main_thm_real}.} \\

$(\ref{2ass1})\Rightarrow(\ref{1ass1})$: thanks to Lemma \ref{lemme_OG_real}, it suffices to mimic the proof 
$(\ref{2ass})\Rightarrow(\ref{1ass})$ of Theorem \ref{main_thm2}.\\

$(\ref{1ass1})\Rightarrow(\ref{2ass1})$: we argue as in the proof of $(\ref{1ass})\Rightarrow(\ref{2ass})$ of Theorem \ref{main_thm2}. 
We define the same sequences $(u_j),$ $(v_j)$, $(w_j)$ and we denote by $(f_j)$ an enumeration of 
all the polynomials with coefficients in $\mathbb{Q}$. Using Lemma \ref{thmCT_real}, we construct step by step an increasing sequence of positive integers $(j_s)$ 
and polynomials 
and polynomials $P_i,\tilde{P}_i$, $i=1,\dots,s$ such that 
\begin{equation}\label{equaf11}
P_i(x)=\sum_{k=u_{j_i}}^{v_{j_i}}c_{j_i,k}x^k,\quad \tilde{P}_i(x)=\sum_{k=v_{j_i}+1}^{w_{j_i}}c_{j_i,k}x^k
\end{equation}
with
\begin{equation}\label{equaf21}\sup_{x\in [-s,s]}\left\vert \sum_{i=1}^{s-1}P_i(x) +P_{s}(x)-f_s(x)\right\vert <\frac{1}{(s+1)^2}
\end{equation}
\begin{equation}\label{equaf31}\sup_{x\in [-s,s]}\vert \tilde{P}_s(x)+f_s(x)\vert< \frac{1}{(s+1)^2}.
\end{equation}
We get by the triangle inequality 
\begin{equation}\label{equaf41}\begin{array}{rcl}\displaystyle\sup_{x\in [-s,s]}\left\vert \sum_{i=1}^{s}({P}_i(x)+ \tilde{P}_i(x))\right\vert&\leq &\displaystyle
\sup_{x\in [-s,s]}\left\vert \sum_{i=1}^{s}\left({P}_i(x)+\tilde{P}_i(x)\right)+{P}_{s}(x)-f_{s}(x)\right\vert\\&&\ 
\displaystyle + 
\sup_{x\in [-s,s]}\vert \tilde{P}_{s}(x)+f_{s}(x)\vert \\&<&\displaystyle\frac{2}{(s+1)^2}.\end{array}
\end{equation}
Finally let us consider the formal power series
$$\hat{f}(x)=\sum_{i\geq 1}\left(P_{i}(x)+\tilde{P}_i(x)\right).$$ 
By Borel's theorem one can find a function $f\in C_0^{\infty}(\mathbb{R})$ such that its Taylor development at zero is $\hat{f}$.  
The inequality $(\ref{equaf21})$ ensures that $f\in\mathcal{U}$. Moreover observe that the property (\ref{equa_suites_def}) 
implies that
$$\forall n\geq n_{j_1}+1,\forall s\geq 1\quad \mu_n\notin [u_{j_s},w_{j_s}-1]$$
and the equation (\ref{equaf41}) guarantees that, for all $n\in\mathbb{N},$
$$\sup_{x\in [-1,1]}\left\vert S_{\mu_n}(f)(x)\right\vert\leq 2\sum_{s\geq 1}\frac{1}{(s+1)^2}=\frac{\pi^2}{3}-2.$$
Thus $f\notin\mathcal{U}^{(\mu)}$. 
\hfill $\Box$\par\medskip 

\section*{Acknowledgments} The author was partly supported by the 
grant ANR-17-CE40-0021 of the French National Research Agency ANR (project Front).

\end{document}